\newtheorem{claim}{Claim}
\newtheorem{lemma}{Lemma}
\newtheorem{theorem}{Theorem}
\newtheorem{cor}{Corollary}
\theoremstyle{definition}
\newtheorem{definition}{Definition}
\theoremstyle{remark}
\newcommand{\R}{\mathbb{R}}
\newcommand{\C}{\mathbb{C}}
\DeclareMathOperator{\sgn}{sgn}
\DeclareMathOperator{\dist}{dist}
\DeclareMathOperator{\supp}{supp}
\newcommand{\rhup}{\rightharpoonup}
\newcommand{\ba}{\mathbf{a}}
\newcommand{\bb}{\mathbf{b}}
\newcommand{\Z}{\mathbb{Z}}
\newcommand{\cH}{\mathcal{H}}
\newcommand{\La}{\Lambda}
\newcommand{\la}{\lambda}
\newcommand{\eps}{\epsilon}
\newcommand{\Mu}{\rm M}
\newcommand{\dr}{\downharpoonright}
\newcommand{\dl}{\downharpoonleft}
\renewcommand{\Im}{\mathop{\rm Im}\nolimits}
\renewcommand{\Re}{\mathop{\rm Re}\nolimits}
\newcommand{\B}{{B_\pi}}
\newcommand{\Ba}{{B_\pi^1}}
\newcommand{\li}{{\ell^\infty_1}}
\newcommand{\beq}{\begin{equation}}
\newcommand{\eeq}{\end{equation}}
\title[Bandlimited Lipschitz functions]{Bandlimited Lipschitz functions}
\author[Yu. Lyubarskii]{Yurii Lyubarskii}
\address{Dept. Mathematical Sciences, Norwegian University of Science
and Technology,NO-7491 Trondheim, Norway} \email{yura@math.ntnu.no}
\author[J. Ortega-Cerd\`a]{Joaquim Ortega-Cerd\`a}
\address{Dept.\ Matem\`atica Aplicada i An\`alisi,
Universitat  de Barcelona, Gran Via 585, 08007 Barcelona, Spain}
\email{jortega@ub.edu}
\subjclass[2010]{Primary 30D10;  Secondary 42C15.}
\keywords{Non-uniform sampling and interpolation, bandlimited functions, 
divided differences, bounded mean oscillation}
\thanks{The first author is partially supported by the Norwegian Research 
council, (projects  NOPIMA \# 185359 and DIMMA \#  213638). The second author is 
supported by the Generalitat de Catalunya (grant 2009 SGR 1303) and the Spanish 
Ministerio de Econom\'{\i}a y Competividad (project MTM2011-27932-C02-01). Part 
of this work was done while the authors were staying at the Center for Advanced 
Study, Norwegian Academy of Science, and they would like to express their 
gratitude to the institute for the hospitality}
\begin{document}

\begin{abstract}
 We study the space of bandlimited Lipschitz functions in one variable. In
particular we provide a geometrical description of  interpolating and
sampling sequences for this space. We also give a description of the trace of
such functions to sequences of critical density in terms of a cancellation
condition.
\end{abstract}

 \maketitle
\section{Introduction}

A standard model for one-dimensional bandlimited signals is the space  of 
functions (or distributions) that have Fourier transform $\hat f$ supported on a 
finite interval. According to the Paley-Wiener-Schwartz theorem the functions 
with compact  frequency support can be extended from the real line  into the 
whole complex plane $\C$   as   entire functions of exponential type. 

The size of a signal  is usually measured either in terms of its energy, i.e. 
the $L^2(\R)$ norm, or in the supremum $L^\infty(\R)$ norm. In the first case we 
encounter the familiar Paley-Wiener space of entire functions and in the second 
the Bernstein space (its definition is reminded on the next page).

One objection to the use of the Bernstein space as a model for bandlimited 
signals is that a very common operation in signal processing, the filtering, 
does not preserve the space. By filtering  we mean the operation that to $f$ 
corresponds a function $T(f)$ with Fourier transform $\hat f \chi_{w<0}$. Here 
$\chi_{w<0}$  denotes the characteristic  function of the negative axis. The 
content of the signal in all frequencies bigger than $0$ has been filtered out. 
The fact that the Bernstein space is not preserved by filtering is due to the 
unboundedness of the Hilbert transform in $L^\infty(\mathbb R)$: for $f\in 
L^\infty(\R)$ its Hilbert transform belongs to the space of functions   of 
bounded mean oscillation $BMO(\R)$.  We refer the reader to  
\cite[Ch.~6]{Garnett} for the definition and basic properties of functions in 
this space.  In view of this, a natural substitute for the Bernstein space has 
been proposed in \cite{Thakur11}. It consists of entire functions of exponential 
type that, when restricted to $\R$, belong to $BMO$.

It was observed, see \cite[Thm~7]{Thakur11}, that the bandlimited functions in 
$BMO$ enjoy a much better regularity than expected, they have bounded derivative 
on $\mathbb R$, i.e., they are Lipschitz. We provide a short argument showing 
this: from the Fourier transform side taking derivative and applying the Hilbert 
transform corresponds to multiplying the function by $-i\omega$ and by 
$\sgn(\omega)$ respectively. If the function $f$ has Fourier transform supported 
on $[-\pi,\pi]$, say,  then $f'=f\star \phi$, where $\hat\phi$ is any compactly 
supported smooth function which coincides with $i \omega$ on $[-\pi,\pi]$. If 
$f$ is a $BMO$ function then $f=Hg+h$, where $g,h\in L^\infty$ and    $H$ is the 
Hilbert transform.  If, in addition, $\supp \hat f \subset [-\pi,\pi]$, then 
$f'= \psi\star g+ \phi\star h$, where $\phi$ is as above and $\hat \psi(\omega) 
=\sgn(\omega) \hat \phi(w)$. Both $\phi$ and $\psi$ belong to  $L^1(\R)$ and 
$f'$ is therefore bounded.

Thus it seems natural to consider  the following spaces: The  Bernstein space of 
entire functions:
\[
\B= \{F\in L^\infty(\R) ,  \supp \hat F\subset[-\pi,\pi]\}
\]
endowed with its natural norm $\|F\|_\B:=\sup_{\R}|F(x)|$ and the space
\[
\Ba=\{ F,\ F' \in \B  \}, 
\]
endowed with its natural seminorm: $\|F\|_{\Ba}:=\sup_{\R}|F'(x)|$. Clearly by 
the Bernstein inequality $\B\subset \Ba$ but the converse is not true, the 
function $f(x)=x$ belongs to $\Ba\setminus \B$. 

A fundamental problem in the study of bandlimited functions is the process of
discretization of signals. This problem can be decoupled in two:

\begin{itemize}
\item  The problem of stable reconstruction of  a signal from the set of its 
samples at a given sequence of points $\La \subset \R$. If this is possible we 
say that $\La$ is a sampling sequence. 
\item Its companion problem of prescribing an arbitrary set of values on a 
sequence $\La \subset \R$. If this is possible  we say that  $\La$ is an 
interpolating sequence.
\end{itemize}

Beurling in \cite[Chapt. IV, V]{Beurling89a} considered  both problems in the Bernstein space 
and provided a complete geometric description of sampling and interpolating 
sequences. We aim to do such description for the space $\Ba$.  The major 
difference between the two settings is related to the fact that the  set of 
traces of functions in $\Ba$ is now defined by the divided differences of the 
values rather than the values themselves, so the (now) classical  machinery 
from \cite{Beurling89a} cannot be applied directly. We need to combine this 
machinery with  additional tools  in order to deal with spaces defined through 
the derivatives. In particular we use ideas from   \cite{BoeNic04}, this 
article deals with the Bloch space and also  \cite{LM05}, this article studies  
functions whose derivatives are in the Paley-Wiener space. 
 

We now introduce the corresponding spaces of sequences.

Given  $\Lambda=\{\lambda_k\}_k\subset \R$, $\lambda_k<\lambda_{k+1}$,   and a
function $a:\Lambda \to \C$ we denote the divided differences:
\[
  \Delta_a(\lambda_k)= \frac
{a(\lambda_{k+1})-a(\lambda_k)}{\lambda_{k+1}-\lambda_k},
  \]
and consider the space
\begin{align*}
\ell^\infty_1(\Lambda)= \{a:\Lambda \to \C; \| a
\|_{\ell^\infty_1(\Lambda)}:=\sup_{k} |\Delta_a(\lambda_k)| < \infty\},  
\end{align*}

Equivalently  it can be defined as
\[
\ell^\infty_1(\Lambda)= \{a:\Lambda \to \C; \| a
\|_{\ell^\infty_1(\Lambda)}:=\sup_{\la,\la' \in \La, \  \la\neq \la'} \left \{ 
\left | \frac{a(\la)-a(\la')}{\la -\la'} \right |  \right  \} < \infty\}.
\]

It is clear that, given $F\in \Ba$ we have that $a\in \li$ if 
$a(\la_k)=F(\la_k)$.

\begin{definition}
We say that a sequence $\Lambda$ is \emph{sampling} for $\Ba$  if 
\[
  \|F\|_{\Ba} \leq C \|\{F(\lambda_k)\}\|_\li, \ F\in \Ba 
\]
with some $ C<\infty$ independent of the choice of $F$.
\end{definition}
By $K=K(\Lambda)$ we denote the \emph{sampling constant}, i.e. the smallest
possible value of $C$ in the above inequality. 

\begin{definition} 
We say that  a sequence $\Lambda$ is \emph{interpolating} for $\Ba$  if for
each $a\in \li(\Lambda)$  there is an $ F\in \Ba$ such that 
 \begin{equation}
\label{interpolation}
 F(\lambda_k)=a(\lambda_k),  \ k\in \Z.
 \end{equation}
 \end{definition}
If a sequence $\Lambda$ is interpolating, then by the closed graph theorem it is 
possible to interpolate with a size control. That is, there is a constant $C$ 
such that, for each $a\in \li(\La)$, one can choose   $F$ interpolating $a$ on 
$\Lambda$ and  in addition $\|F\|_{B_\pi^1}\le C 
\|a\|_{\ell_1^\infty(\Lambda)}$. The smallest constant possible in this 
inequality is called the \emph{interpolation constant} and it will be denoted by 
$K_0(\Lambda)$.

The problem of describing of sampling and interpolating sequences sequences in 
$\Ba$   presents an interesting challenge because the routine interpolation 
tools such as Lagrange interpolation series cannot be applied directly to 
interpolation by divided differences. We develop technique of interpolation 
related to $\bar{\partial}$ problem and combine them with the classical 
techniques in \cite{Beurling89a}.
 
Our first aim is to provide a complete geometric characterization of
interpolating and sampling sequences in $\Ba$. We introduce now the geometric
concepts that are used in the description.

We say that a sequence $\Lambda\subset \mathbb R$ is \emph{separated}
whenever
\[
\alpha := \inf \{|\la-\la'|,  \ \la,\la'\in \La, \ \la\neq \la'\}>0,
\]
we say that $\alpha$ is the \emph{separation constant}  for $\La$.

We will also use the classical notions of upper and lower Beurling densities:
\[
D^+(\Lambda)= \lim_{R\to \infty} \sup_{x\in \R} \frac{\#(\Lambda\cap (x,x+R))}
R,
\]
\[
D^-(\Lambda)= \lim_{R\to \infty} \inf_{x\in \R} \frac{\#(\Lambda\cap (x,x+R))}
R.
\]

\begin{theorem}
\label{thm:inter}
A sequence $\Lambda \subset \R$ is interpolating for $\Ba$ if and only if
$\Lambda=\Lambda_1\cup \Lambda_2$ where $\Lambda_1$, $\Lambda_2$  are
two separated
sequences, $\Lambda_1\cap \Lambda_2=\emptyset$ and  $D^+(\Lambda) < 1$.
\end{theorem}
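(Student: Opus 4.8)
The plan is to reduce the problem to a known interpolation theorem in the Bernstein space $\B$ by differentiating. Suppose first that $\Lambda$ is interpolating for $\Ba$. Given $a\in\li(\Lambda)$ with $\|a\|_{\li}\le 1$, the interpolating function $F\in\Ba$ has $F'\in\B$, and the values $F'(\lambda_k)$ are controlled: $|F'(\lambda_k)|\le\|F\|_{\Ba}\le K_0(\Lambda)$. The first step is to show that prescribing \emph{divided differences} on $\Lambda$ is essentially as flexible as prescribing arbitrary bounded values on $\Lambda$, so a counting/covering argument forces $D^+(\Lambda)<1$ and the splitting into two separated sequences exactly as in Beurling's characterization of interpolating sequences for $\B$ (for which the condition is $D^+<1$ together with being a finite union of separated sequences — here one separated sequence suffices in Beurling's theorem, but the divided-difference structure of $\Ba$ only sees pairs, which is why two separated sequences appear). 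Concretely, if $\Lambda$ were not a union of two separated sequences, one could find triples of points with two very small consecutive gaps, and the divided differences across such a triple would be essentially independent while $\li$ only controls them by the coarser scale, contradicting interpolation; and if $D^+(\Lambda)\ge 1$ one packs too many constraints, contradicting a Plancherel–Pólya type bound on $F'\in\B$.

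For the converse — the substantive direction — assume $\Lambda=\Lambda_1\cup\Lambda_2$ with $\Lambda_1,\Lambda_2$ separated and disjoint, and $D^+(\Lambda)<1$. Given $a\in\li(\Lambda)$ I want to build $F\in\Ba$ with $F(\lambda_k)=a(\lambda_k)$. The idea, following the $\dbar$-technique announced in the introduction and the approach of \cite{LM05}, is: first construct a smooth (not entire) function $G$ on $\C$ that interpolates the data and has the right growth, then correct it by solving a $\dbar$-equation so that $F=G-u$ is entire of exponential type with $F'\in L^\infty(\R)$. The smooth interpolant can be assembled from bump functions centered at the $\lambda_k$: since $D^+(\Lambda)<1$ there is a little room in frequency, and since $\Lambda$ splits into two separated pieces one can control the divided differences — on each separated subsequence the gaps are bounded below, so a Lagrange-type finite sum with smooth cutoffs interpolates the values with derivative bounded by $\|a\|_{\li}$. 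The key point is that the divided-difference bound, not a bound on the values themselves, is what propagates: a first-order Taylor/telescoping estimate shows $|G'|\lesssim\|a\|_{\li}$ on $\R$ uniformly. Then $\dbar G$ is supported off $\R$ in a strip, decays suitably, and the weight $e^{-\pi|\Im z|}$ (Hörmander-type estimate, or an explicit Cauchy-transform solution as in \cite{BoeNic04}) yields a solution $u$ with $u=0$ on $\Lambda$, $u'$ bounded on $\R$, and $F=G-u$ of exponential type $\pi$. Adjusting the constant in the exponential type — using $D^+(\Lambda)<1$ to leave a margin $\delta>0$ so the bumps can have frequency support in $[-\pi+\delta,\pi-\delta]$ and the $\dbar$-correction stays within $[-\pi,\pi]$ — closes the argument.

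The main obstacle is the $\dbar$-correction step: one must solve $\dbar u=\dbar G$ with simultaneous control of (i) the exponential type of $F=G-u$ (so the weight must be sharp, $e^{-\pi|\Im z|}$ or $(1+|z|)^{-N}e^{-\pi|\Im z|}$), (ii) the vanishing of $u$ on all of $\Lambda$ (so that $F$ still interpolates $a$ — this typically forces dividing by a generating function for $\Lambda$ and requires the two-separated-sequences hypothesis to keep that generating function's growth under control), and (iii) the boundedness of $u'$ on $\R$ rather than merely $u$ — a half-derivative more than the usual Paley–Wiener setting, which is exactly where the Hilbert-transform/$BMO$ subtlety from the introduction re-enters and where the ideas borrowed from the Bloch-space paper \cite{BoeNic04} do the work. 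A secondary technical point is the bookkeeping near clusters where the two separated sequences nearly collide: there the divided difference at consecutive points of $\Lambda$ is a genuine constraint on $F'$, and one must check the smooth interpolant respects it; this is a routine but careful local estimate using that $\Lambda_1\cap\Lambda_2=\emptyset$ rules out triple collisions.
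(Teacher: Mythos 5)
Your write-up is a strategy outline rather than a proof: in both directions the step you identify as the crux is exactly the step you leave unresolved. For the necessity, the claim that ``prescribing divided differences is essentially as flexible as prescribing arbitrary bounded values'' is precisely what has to be proved before Beurling's theorem for $\B$ can be invoked to get $D^+(\La)<1$, and it is not a counting argument. The paper's route is: (a) show that at most two points of $\La$ can cluster (your triple-collision argument, which is fine); (b) show that the doubled points can be slightly separated while preserving the interpolation property --- this needs a Neumann-series/contraction iteration, not just a perturbation remark; (c) for the now separated $\La$, build functions $f_\la\in\B$ with $f_\la(\mu)=\delta_{\la\mu}$ and decay $|f_\la(x)|\lesssim(1+|x-\la|^2)^{-1}$ by interpolating the data \eqref{eq:ala} on $\La$ \emph{augmented by three extra points near $\la$} and dividing by $(z-\xi_-)(z-\xi_+)(z-\xi_1)$. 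Step (c) requires knowing that $\La\cup\{\xi\}$ is interpolating with a constant depending only on $\dist(\xi,\La)$ (Claim~\ref{extrapoint}), which the paper obtains from a compactness argument on weak limits of translates and a uniform lower bound $\rho_\La(\xi)>\kappa$. None of this is in your proposal, and without it the passage from $\Ba$-interpolation (which only controls $F'$, hence values only up to integration) to $\B$-interpolation does not go through.

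For the sufficiency, you explicitly flag the $\dbar$-correction --- solving $\dbar u=\dbar G$ with $u|_\La=0$, $u'\in L^\infty(\R)$, and $F=G-u$ of type $\pi$ --- as ``the main obstacle'' and then do not resolve it; but that obstacle \emph{is} the theorem. In the $L^\infty$/Lipschitz setting there is no off-the-shelf H\"ormander estimate giving these three properties simultaneously, and controlling $u'$ rather than $u$ is the whole point of the space $\Ba$. The paper's actual construction is concrete: after reducing to $D^-(\La)>0$ and first treating a separated $\La$, it builds elementary solutions $\Phi_k$ as Cauchy-type integrals of $1/S_{\la_k}$ along vertical lines, where $S_{\la_k}$ is a sine-type function vanishing on $\La\cup\{\la_k+i\}\cup\Sigma(\la_k)$ with uniform bounds; these satisfy $\Phi_k(\la_{j+1})-\Phi_k(\la_j)=\delta_{kj}$, and convergence of $\sum\Phi_k'$ is obtained from the Logvinenko--Sereda theorem on a relatively dense set avoiding the contours. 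The general case (union of two separated sequences) is then handled by the B{\o}e--Nicolau splitting $F=H_1+SH_2$ with $H_1,H_2\in B^1_{\pi/2}$ and $S$ a sine-type function of type $\pi/2$ vanishing on one half of the alternate splitting of $\La$. You cite the right sources, but the proposal contains no substitute for either the elementary solutions or the splitting, so as written it does not establish the sufficiency.
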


\begin{theorem}
\label{thm:sampl}
A sequence $\Lambda \subset \R$ is sampling for $\Ba$ if and only if there are
separated  sequences  $\Lambda_1,\Lambda_2\subset \Lambda$, $\Lambda_1\cap
\Lambda_2=\emptyset$ and  $D^-(\Lambda_1\cup \Lambda_2) > 1$.
\end{theorem}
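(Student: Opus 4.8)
The plan is to transfer the problem to the Bernstein space $\B$ and then run a normal families / weak--limit argument in Beurling's style, the new feature being that two points of $\Lambda$ lying very close together must be treated as a carrier of derivative data. Modulo constants the map $F\mapsto F'$ is an isometric isomorphism of $\Ba$ onto $\B$, and since
\[
\frac{F(\lambda)-F(\lambda')}{\lambda-\lambda'}=\frac1{\lambda-\lambda'}\int_{\lambda'}^{\lambda}F'(t)\,dt ,
\]
the sampling inequality for $\Ba$ is equivalent to
\[
\|G\|_\infty\le C\sup_{\lambda\neq\lambda'\in\Lambda}\Bigl|\frac1{\lambda-\lambda'}\int_{\lambda'}^{\lambda}G(t)\,dt\Bigr|,\qquad G\in\B .
\]
The right-hand side is a supremum over all pairs, so enlarging $\Lambda$ cannot destroy sampling; hence in the sufficiency part it suffices to treat $\Lambda=\Lambda_1\cup\Lambda_2$.

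\emph{Sufficiency.} Arguing by contradiction, pick $F_n\in\Ba$ with $\|F_n'\|_\infty=1$, $F_n(0)=0$ and $\|\{F_n(\lambda_k)\}\|_\li\to0$; translate by $t_n$ with $|F_n'(t_n)|\ge1/2$ and pass to a subsequence so that $G_n:=F_n'(\cdot+t_n)$ converges locally uniformly to $G\in\B$ with $\|G\|_\infty\le1$, $|G(0)|\ge1/2$, the primitives $\tilde F_n(x):=\int_0^xG_n$ converge in $C^1_{\mathrm{loc}}$ to $\Phi(x):=\int_0^xG$, and $\Lambda_i-t_n$ converge to separated sequences $\Lambda_i^\infty$. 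A pair $\lambda_k<\lambda_{k+1}$ of $\Lambda$ whose length stays bounded below contributes in the limit an ordinary point $\mu$ of $\Lambda^\infty:=\Lambda_1^\infty\cup\Lambda_2^\infty$ across which $\Phi(\mu)=\Phi(\mu')$ for neighbouring $\mu'$; a pair whose length tends to $0$ contributes a double point, where additionally $\Phi'=G$ vanishes. After removing an additive constant $\Phi\in\Ba$ vanishes on $\Lambda^\infty$ with these multiplicities. Since $\Phi$ is entire of exponential type $\pi$ with at most linear growth on $\R$ it lies in the Cartwright class, so the upper density of its zero set is at most $1$; but by lower semicontinuity of the lower Beurling density under weak limits, $\Lambda^\infty$ counted with multiplicity has lower density $\ge D^-(\Lambda_1\cup\Lambda_2)>1$. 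Hence $\Phi\equiv0$, contradicting $|G(0)|\ge1/2$, so $\Lambda$ is sampling. (Alternatively, as $D^-(\Lambda_1\cup\Lambda_2)>1$ one can use Theorem~\ref{thm:inter} to extract from $\Lambda$ an interpolating subsequence of density close to $1$ and assemble a reconstruction operator directly, the interpolant with size control being produced by the $\dbar$-technique of the paper.)

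\emph{Necessity.} Let $\Lambda$ be sampling with constant $K$. First a \emph{regularization lemma}: by Bernstein's inequality $\|F''\|_\infty\le\pi\|F'\|_\infty$ the divided difference of $F$ over an interval of length $\le c$ equals $F'$ at a nearby point up to $O(c\|F\|_{\Ba})$, so all divided differences of $F$ over subintervals of a common interval of length $c$ agree up to $O(c\|F\|_{\Ba})$; choosing $c$ with $cK$ small one may delete the superfluous points of every tight cluster of $\Lambda$ and be left with $\Lambda^\sharp=\Lambda_1\cup\Lambda_2\subset\Lambda$, a disjoint union of two separated sequences, still sampling with a finite constant. Then a \emph{necessary-density lemma}: a union of two separated sequences that is sampling for $\Ba$ has lower density $>1$. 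This is proved again with weak limits — the sampling inequality passes to Beurling limits — using that a limit taken over a sparse region would be a union of two separated sequences of lower density $\le1$ on which a nonzero $G\in\B$ can be placed with vanishing interval averages and vanishing values at the double points (for a genuine gap this is an explicit mean-zero bandlimited bump; in the borderline case $D^-=1$ it needs a sharper extremal analysis, exactly as for $\B$, where $\Z$ is not sampling). Combining the two lemmas yields the required $\Lambda_1,\Lambda_2\subset\Lambda$.

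The main obstacle is the necessity, and within it the borderline case: ruling out critical lower density $1$ for a sampling sequence cannot be done softly and forces the delicate extremal/weak-limit analysis that already makes the critical case hard in Beurling's theory for $\B$, now carried out for the divided-difference functionals — interval averages together with the derivative data sitting on the collapsed pairs — where the $\dbar$-machinery replaces the unavailable Lagrange series. A secondary difficulty is the regularization lemma, where one must check that collapsing clusters really lands in a disjoint union of exactly two separated sequences while keeping the sampling constant finite.
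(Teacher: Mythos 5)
Your sufficiency argument is essentially the paper's: reduce to $\Sigma=\Lambda_1\cup\Lambda_2$ (enlarging a sampling set preserves sampling), pass to weak limits of translates, and show that every limit configuration is a uniqueness set for the divided-difference data, with collapsed pairs carrying derivative conditions. Your use of the Cartwright--Levinson zero-density theorem for the primitive $\Phi$ in place of the paper's reduction to real-valued $F$, Rolle's theorem, and uniqueness in $\B$ is a harmless variant, and the regularization step in your necessity (keeping exactly two points per tight cluster, justified via Bernstein's inequality for $F''$) matches the paper's first stability result.

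The genuine gap is in the rest of the necessity. Your ``necessary-density lemma'' --- that a sampling union of two separated sequences has $D^->1$ --- is precisely the hard point, and you do not prove it: you explicitly defer the borderline case $D^-=1$ to ``a sharper extremal analysis, exactly as for $\B$.'' But Beurling's extremal analysis for $\B$ concerns point-evaluation functionals and does not transfer verbatim to your interval-average functionals together with derivative data at double points, so as written this is an unproven assertion rather than a citation. The paper avoids redoing any extremal analysis by two reductions you are missing: (i) a second stability result, namely that replacing one of the two separated constituents by an $\eps$-close (in Hausdorff distance) separated sequence changes $K(\Lambda)^{-1}$ by at most $10\eps$, which lets one push the near-coincident points apart and assume the sampling sequence is genuinely separated; and (ii) the observation that a separated, relatively dense $\Ba$-sampling sequence is automatically $\B$-sampling --- if $F\in\B$ has $\|F|_\Lambda\|_{\ell^\infty}\le 1$ then separation bounds the divided differences, the $\Ba$-sampling inequality bounds $\|F'\|_\infty$, and relative density then bounds $\|F\|_\infty$ --- after which Beurling's theorem for $\B$ yields $D^->1$ with no new extremal work. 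Without steps (i) and (ii), or a genuine substitute for the extremal analysis you invoke, the necessity is not established.
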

{\bf Remark.}
 The fact that that $\Lambda_1\cap \Lambda_2=\emptyset$ is not relevant. This
is due to the fact that for the sake of simplicity in the exposition we are not
considering points with multiplicity. If we did, we would have to introduce
derivatives at the multiple points replacing divided differences.

We will also study two further problems in these spaces. As one can see from 
Theorem~\ref{thm:inter}, the trace of a function $F\in \Ba$ on a sequence $\Lambda$ of 
density one is not arbitrary sequence with bounded divided differences. Apart 
from this necessary   condition it must also satisfy a certain 
cancellation property. 

This was studied by Levin in \cite[Appendix VI]{Levin56}, see also 
\cite{Levin96}, in the case of the Bernstein space and $\La$ being the set of 
integers. We carry out the analogous result in the context of bandlimited 
Lipschitz functions. The characterization of the traces  on a sequence which is 
a zero set of a sine-type function (the integers is the main example) is 
achieved through the use of the discrete and regularized Hilbert transform, in 
a similar spirit as   for the Bernstein space and integer nodes, yet 
an additional regularization is needed. One consequence of our result is that 
it is possible to reconstruct the function from its values in the zeros of a 
sine-type function plus the value in any other given point. Of course the 
reconstruction is not stable in view of Theorem~\ref{thm:sampl}. This is the 
case also in the Bernstein space but it is curious that this is possible   in 
the strictly bigger  space $\Ba$. This fact has already been observed in 
\cite{Thakur11}.

{\bf Remark} It is interesting to know how wide a space $X$ of functions of 
exponential type $\pi$ can be such that the zero set of a sine-type function 
plus one point are sets of uniqueness for $X$. We do not know the general 
answer to this question. Yet we observe that this property is more related to 
the regularity of the Fourier transform (in the distributional sense)  of 
functions in $X$ near the endpoints of the spectra, rather than their growth 
properties. In particular the spaces considered in \cite{LM05} possess this 
property yet contain functions of polynomial growth.

The structure of the paper is as follows. In Section~\ref{sect:sampling} we 
prove Theorem~\ref{thm:sampl} providing a description of sampling sequences in 
$\Ba$. In Section~\ref{sect:interpolating1} we prove the necessity part of the 
interpolating   Theorem~\ref{thm:inter} and in 
Section~\ref{sect:interpolating2} we prove the sufficiency of the geometric 
description. In Section~\ref{sect:traces} we provide a description of the traces 
of functions in $\Ba$ on the zero sets of sine-type functions. 

We will use the following notation: given two positive quantities $a$ and $b$ we 
write $a \lesssim b $ or $b \gtrsim a$ if there is a constant $C>0$ such that 
$a\leq Cb$ for all possible values of parameters. We write $a\simeq b$ if $a 
\lesssim b$ and $b \lesssim a$. 


\section{Sampling sequences}\label{sect:sampling}
The strategy for the sampling part is to reduce the problem to the analogous
problem in $\B$.
\begin{definition} 
A  sequence  $\Lambda\subset \R$ is called \emph{relatively dense} if there is $R>0$ 
such that $\La\cap (\xi-R,\xi+R) \neq \emptyset$ for each $\xi\in \R$.
\end{definition}

\begin{claim}\label{claim:reldense}
Let $\Lambda\subset \R$ be a sampling sequence  for $\Ba$.  Then $\Lambda$ is 
relatively dense.
\end{claim}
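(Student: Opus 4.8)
The plan is to argue by contradiction. Suppose $\Lambda$ is sampling for $\Ba$ but not relatively dense. Then for every $R>0$ there is a point $\xi_R\in\R$ such that the interval $(\xi_R-R,\xi_R+R)$ contains no point of $\Lambda$. We want to construct, for arbitrarily large $R$, a function $F\in\Ba$ which is concentrated on this gap, so that $\sup_{\R}|F'|$ is bounded below while $\|\{F(\lambda_k)\}\|_{\li}$ is small; letting $R\to\infty$ this forces the sampling constant to be infinite, a contradiction.

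The natural building block is a single smooth bump of exponential type $\pi$. Fix once and for all a function $G\in\Ba$ (in fact in $\B$) such that $G(0)=1$, $G'(0)=0$, and $|G(x)|+|G'(x)|$ decays, say, like $1/(1+x^2)$ as $|x|\to\infty$; for instance one may take $G(x)=\sinc^2(x/2)$ or a similar normalized reproducing-type kernel. Given the gap centered at $\xi_R$, set $F_R(x)=G(x-\xi_R)$. Then $F_R\in\Ba$ with $\|F_R\|_{\Ba}=\sup_{\R}|G'|=:c_0>0$, a constant independent of $R$. On the other hand, every $\lambda_k\in\Lambda$ satisfies $|\lambda_k-\xi_R|\ge R$, so by the decay of $G'$,
\[
\left|\Delta_{F_R}(\lambda_k)\right|=\left|\frac{G(\lambda_{k+1}-\xi_R)-G(\lambda_k-\xi_R)}{\lambda_{k+1}-\lambda_k}\right|\le\sup_{|t|\ge R}|G'(t)|\le\frac{C}{R^2},
\]
using the mean value theorem on $G$ and the fact that the whole segment $[\lambda_k,\lambda_{k+1}]$ (when it meets the gap region, it lies outside it on both ends, and in any case the intermediate point where the derivative is evaluated is at distance $\ge R$ from $\xi_R$ — one must be slightly careful here, but if the segment straddles the gap its endpoints are still at distance $\ge R$, and $G'$ being small only far from $0$ requires instead a direct estimate $|G(\lambda_{k+1}-\xi_R)-G(\lambda_k-\xi_R)|\le|G(\lambda_{k+1}-\xi_R)|+|G(\lambda_k-\xi_R)|\le 2C/R^2$ divided by $|\lambda_{k+1}-\lambda_k|\ge 2R$). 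Either way $\|\{F_R(\lambda_k)\}\|_{\li}\le C/R^2\to 0$ while $\|F_R\|_{\Ba}=c_0$, contradicting the sampling inequality.

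The only genuine subtlety — and the step I expect to be the main obstacle — is controlling the divided difference $\Delta_{F_R}(\lambda_k)$ for the (at most two) indices $k$ where the interval $(\lambda_k,\lambda_{k+1})$ actually contains the gap $(\xi_R-R,\xi_R+R)$, so that $\lambda_{k+1}-\lambda_k>2R$ but the mean-value point could a priori be anywhere. As indicated above, for those indices one does not use the mean value theorem but the triangle inequality together with the global decay bound $|G(t)|\le C/(1+t^2)$: both $|\lambda_k-\xi_R|$ and $|\lambda_{k+1}-\xi_R|$ exceed $R$, hence $|F_R(\lambda_k)|,|F_R(\lambda_{k+1})|\le C/R^2$, and dividing by $\lambda_{k+1}-\lambda_k\ge 2R$ only helps. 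For all remaining indices the whole segment lies at distance $\ge R$ from $\xi_R$ and the mean value theorem gives $|\Delta_{F_R}(\lambda_k)|\le\sup_{|t|\ge R}|G'(t)|\le C/R^2$. Thus in all cases $\|\{F_R(\lambda_k)\}\|_{\li}\le C/R^2$, and taking $R$ large enough that $c_0 > C\,K(\Lambda)/R^2$ yields the desired contradiction with $\|F_R\|_{\Ba}\le K(\Lambda)\|\{F_R(\lambda_k)\}\|_{\li}$. Hence $\Lambda$ must be relatively dense.
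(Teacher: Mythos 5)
Your proof is correct and follows essentially the same route as the paper: a contradiction argument with a test function of type $\pi$ concentrated at the center of the gap, estimating the divided differences by the mean value theorem for intervals off the gap and by the decay of the function itself for the one interval straddling it. The only difference is cosmetic — the paper uses $F_\xi(z)=\int_\xi^z \sinc(\zeta-\xi)\,d\zeta$, whose derivative is the sinc kernel, while you use a translated Fejér-type bump, which even gives the slightly better bound $R^{-2}$ in place of $R^{-1}$.
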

\begin{proof}
Let the opposite be true: for any $R>0$ there is a $\xi\in\R$ such that 
$\La\cap (\xi-R,\xi+R)= \emptyset$. Consider the function 
\[
F_\xi(z):=\int_\xi^z \frac{\sin(\zeta-\xi)}{\zeta-\xi}d\zeta \ \in \B.
\]
We clearly have $\|F_\xi\|_{\Ba}=1$. On the other hand $\|F|_\La\|_{l^\infty_1}\lesssim 
R^{-1}$ since $|F'_\xi(x)|\lesssim R^{-1}$ as $x\not\in (\xi-R,\xi+R)$ and 
also $\|F_\xi\|_{\B}\lesssim 1$. This contradicts the sampling property of 
$\La$.
\end{proof}

\begin{claim}
If  a separated sequence $\Lambda$ is sampling for $\Ba$ then 
$D^{-}(\Lambda)>1$.
\end{claim}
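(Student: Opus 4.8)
The plan is to reduce the statement to the known sampling theorem for the Bernstein space $\B$. The key observation is that if $\Lambda$ is separated, then the sampling inequality for $\Ba$ controls $\|F'\|_{\B}$ in terms of the divided differences $\Delta_F(\lambda_k)$, and divided differences of $F$ are close to averages of $F'$ over the intervals $[\lambda_k,\lambda_{k+1}]$. More precisely, $\Delta_F(\lambda_k) = \frac{1}{\lambda_{k+1}-\lambda_k}\int_{\lambda_k}^{\lambda_{k+1}} F'(t)\,dt$, so $\|F|_\Lambda\|_{\li}$ is a sequence of (signed) local averages of $G:=F'\in\B$. I want to show that sampling for $\Ba$ forces these averages to control $\|G\|_\infty$, and then extract from this a genuine sampling inequality for $\B$ on a sequence of the same lower density as $\Lambda$ (up to adding points), so that Beurling's theorem gives $D^-(\Lambda)>1$.

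First I would fix the separation constant $\alpha$ of $\Lambda$. By Claim~\ref{claim:reldense}, $\Lambda$ is also relatively dense, so there is $R$ with $\alpha \le \lambda_{k+1}-\lambda_k \le 2R$ for all $k$; write $\Lambda = \{\lambda_k\}_{k\in\Z}$. Given $G\in\B$, set $F(z) = \int_0^z G(\zeta)\,d\zeta \in \Ba$ with $\|F\|_{\Ba} = \|G\|_\infty$. The sampling inequality gives $\|G\|_\infty \le K \sup_k |\Delta_F(\lambda_k)| = K\sup_k \bigl|\frac{1}{\lambda_{k+1}-\lambda_k}\int_{\lambda_k}^{\lambda_{k+1}} G\bigr|$. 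The next step is the standard trick for passing from interval averages of a bandlimited function back to point values: choose in each gap $[\lambda_k,\lambda_{k+1}]$ a point $\mu_k$, forming a separated and relatively dense sequence $M=\{\mu_k\}$, and estimate $|G(\mu_k)|$ by the average of $G$ over a neighborhood of $\mu_k$ plus an error controlled by $\|G'\|_\infty \lesssim \|G\|_\infty$ (Bernstein). Since the gaps may be shorter than the oscillation scale of $G$ one cannot directly recover $G(\mu_k)$ from a single interval average; instead I would average the estimate over consecutive gaps, or equivalently work with the function $\Phi(z)=\int_{\lambda_0}^z G$ whose increments over $\Lambda$ are exactly the $(\lambda_{k+1}-\lambda_k)\Delta_F(\lambda_k)$, and combine a telescoping sum with Bernstein's inequality to obtain $\sup_x |G(x)| \lesssim K\,\|G\|_\infty \cdot$ (nothing useful) — which is vacuous; so the real argument must instead run in the other direction and is the delicate point.

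The cleaner route, and the one I would actually carry out, is contrapositive via Beurling's density theorem stated for $\B$: suppose $D^-(\Lambda)<1$. Then there is a sine-type-like function, or more simply an element of $\B$, concentrated away from a long block of $\Lambda$ — take $R$ large and $\xi$ so that $\Lambda \cap (\xi-R,\xi+R)$ has at most $(1-\delta)\cdot 2R$ points, and build $G\in\B$ with $\|G\|_\infty = 1$ that is small outside $(\xi-R,\xi+R)$ and whose mean over every subinterval of $(\xi-R,\xi+R)$ bounded by the gaps of $\Lambda$ there is $o(1)$ as $R\to\infty$; such $G$ exists because the gaps inside the block are on average $> 1/(1-\delta) > 1$, so one can modulate by $e^{i\omega x}$ with $|\omega|<\pi$ tuned to make the averages over all those gaps small — this is exactly the mechanism in Beurling's lower-density argument, now applied to $G$ rather than to $F$. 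Then $F(z)=\int_0^z G$ has $\|F\|_{\Ba}=1$ but $\|F|_\Lambda\|_{\li}$ tends to $0$, contradicting sampling. The main obstacle is making the last construction uniform in the (bounded but otherwise arbitrary) gap structure of $\Lambda$ inside the block; I expect this requires either invoking Beurling's theorem as a black box after first reducing $\Ba$-sampling to $\B$-sampling by the averaging argument above, or redoing his compactness/weak-limit argument with the extra integration, and it is here that the hypothesis that $\Lambda$ is separated (so that the averages are over uniformly long intervals and $F'$ stays in $\B$) is essential.
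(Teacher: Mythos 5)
Your proposal does not close: both routes you sketch stall at exactly the points you yourself flag, and the step that actually makes the reduction to Beurling work is missing. The paper's proof tests the $\Ba$-sampling inequality on functions that are \emph{already} in $\B$, rather than on antiderivatives of $\B$-functions. Concretely: take $F\in\B$ with $\|F|_\Lambda\|_{\ell^\infty}\le 1$. Separation (constant $\alpha$) gives $|\Delta_F(\lambda_k)|\le 2/\alpha$, so $\|F|_\Lambda\|_{\li}\lesssim 1$; since $F\in\B\subset\Ba$, the sampling hypothesis yields $\|F'\|_{\B}\lesssim 1$; and relative density of $\Lambda$ (Claim~\ref{claim:reldense}) then bounds $|F(x)|\le |F(\lambda)|+R\|F'\|_{\B}\lesssim 1$ for every $x\in\R$. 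Hence $\Lambda$ is sampling for $\B$, and Beurling's theorem gives $D^-(\Lambda)>1$. Your first route goes in the opposite direction (from $G\in\B$ to $F=\int_0^z G\in\Ba$), and the resulting control of $\|G\|_\infty$ by gap-averages cannot be upgraded to pointwise sampling of $G$ on $\Lambda$, because the discrepancy between $G(\lambda_k)$ and the average of $G$ over $[\lambda_k,\lambda_{k+1}]$ is of order $\|G\|_\infty$ and cannot be absorbed; you correctly sense this but then abandon the correct reduction instead of replacing it.

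The contrapositive route you fall back on has two further genuine gaps. First, producing $G\in\B$ with $\|G\|_\infty=1$ whose averages over \emph{all} gaps of $\Lambda$ inside a long sparse block are uniformly $o(1)$ is itself a Beurling-type necessity theorem for an averaged (derivative) sampling problem; a single modulation $e^{i\omega x}$ cannot make the averages small over gaps of varying lengths, and you do not carry out the compactness or concentration argument that would. Second, the contrapositive of the claim is ``$D^-(\Lambda)\le 1$ implies not sampling,'' and your construction only addresses $D^-(\Lambda)<1$; the critical case $D^-(\Lambda)=1$ (e.g.\ $\Lambda=\Z$) admits no such concentration construction and is precisely what forces Beurling's weak-limit machinery. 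Reducing to sampling for $\B$ as above sidesteps both issues, since Beurling's theorem is then quoted as a black box in the form ``sampling for $\B$ implies $D^->1$.''
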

\begin{proof}
We will prove that in this case $\Lambda$ is sampling for $\B$  and we may use
then the results by Beurling, \cite{Beurling89a}. 

It suffices to prove that, for any function   $F\in \B$ the inequality 
$\|F|_\La\|_{l^\infty} <1$ yields $\|F\|_{\B}<C$ with some constant $C$ 
independent of the choice of $F$. Indeed since $\La$ is separated then  
$\|F|_\La\|_{l^\infty} <1$ yields $\|F|_\La\|_{\li} \lesssim1$ and $F'\in \Ba$, 
$\|F'\|_{\B} \lesssim 1$ because $\Lambda$ is  sampling for $\Ba$. The desired 
inequality follows now from the fact that, being a sampling sequence for $\Ba$, 
the sequence $\La$ is relatively dense.
\end{proof}

Given two sequences $\Lambda, \Gamma\subset \R$ we say that 
$\dist_H(\Lambda,\Gamma) < \eps$  if $\# \{[\mu-\eps, \mu+\eps]\cap 
\Lambda\}\geq 1$ for each $\mu\in \Gamma$ and  $\# \{[\lambda-\eps, 
\lambda+\eps]\cap \Gamma\}\geq 1$ for each $\lambda\in \Lambda$. Here $\dist_H$ 
stands for the Hausdorff distance.

We say that the sequences  $\La_k$  \emph{converge weakly to $\La$} if, for each 
$N>0$
\[
 \dist_H((\La\cap[-N,N])\cup \{-N,N\}, (\La_k\cap [-N,N])\cup \{-N,N\}) \to 0, 
\ \text{as} \ k \to \infty.  
\]
In this case we write $\La_k \rightharpoonup \La$.

Given a sequence $\La$ we denote by $W(\La)$ the set of all its weak limits of 
translates, i.e. sequences $\Mu$ such that 
\[
 \La+x_k \rightharpoonup \Mu, \ \text{for some \ $ \{x_k\}\subset \R$.}
\] 
  
We will use the two following stability results. Their proof mimics the  one  
of \cite[Chapter iV, Thm.~2]{Beurling89a} with the natural modifications: the divided 
differences should be approximated by  the first derivatives  and then 
Bernstein's theorem for the \emph{second} derivative should be applied.

\begin{claim}[First stability result]
Let $\eps>0$ and let the sequences $\La$ and $\Mu$ be such that each $\mu \in 
\Mu$ has at least two $\eps$-neighbors in $\La$, i.e. 
$\#\bigl\{(\Lambda\setminus \Mu)\cap [\mu-\varepsilon,\mu+\varepsilon]\bigr\}\ge 
2$ for all $\mu\in \Mu$.  Then
\[
|K(\Lambda) ^{-1}-K(\Lambda\cup \Mu)^{-1}| < 10 \eps
\]
\end{claim}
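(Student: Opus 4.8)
The plan is to pass to an $\inf$-formulation and compare. Writing $\|F\|_{\Ba}=\|F'\|_\infty$ and $K(\Gamma)^{-1}=\inf\{\|F|_\Gamma\|_{\li}:F\in\Ba,\ \|F\|_{\Ba}=1\}$, one sees at once that, since $\Lambda\subset\Lambda\cup\Mu$, $\|F|_\Lambda\|_{\li}\le\|F|_{\Lambda\cup\Mu}\|_{\li}$, hence $K(\Lambda\cup\Mu)\le K(\Lambda)$ and $K(\Lambda\cup\Mu)^{-1}\ge K(\Lambda)^{-1}$. So only the inequality $K(\Lambda\cup\Mu)^{-1}\le K(\Lambda)^{-1}+10\eps$ requires work, and we may assume $\eps<1/10$, as otherwise $K(\Lambda\cup\Mu)^{-1}\le\|F'\|_\infty=1$ settles it. Fix $\eta>0$ and choose $F\in\Ba$ with $\|F\|_{\Ba}=1$ and $\|F|_\Lambda\|_{\li}\le K(\Lambda)^{-1}+\eta=:M$ (note $M\le1$). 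Put $g=F'$: then $g$ is entire of exponential type $\pi$ with $\|g\|_\infty=1$, so Bernstein's inequality gives $\|g'\|_\infty\le\pi$ — this is the ``Bernstein for the second derivative'' step. For $a\ne b$ write $A[a,b]=\frac{F(b)-F(a)}{b-a}=\frac1{b-a}\int_a^b g$, so that $\|F|_\Gamma\|_{\li}=\sup\{|A[a,b]|:a,b\in\Gamma,\ a\ne b\}$. It then suffices to prove $|A[a,b]|\le M+10\eps$ for every pair $a\ne b$ in $\Lambda\cup\Mu$; letting $\eta\to0$ gives the claim.

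For a pair inside $\Lambda$ this is immediate. Suppose $b=\mu\in\Mu\setminus\Lambda$, and let $\lambda_1,\lambda_2\in\Lambda$ satisfy $|\lambda_i-\mu|\le\eps$. The two neighbours (rather than one, which suffices for $\B$) serve to control $g$ near $\mu$: since $A[\lambda_1,\lambda_2]$ is an average of $g$ over an interval contained in $[\mu-\eps,\mu+\eps]$, one gets $|g(\mu)-A[\lambda_1,\lambda_2]|\le\|g'\|_\infty\,\eps\le\pi\eps$, and $|A[\lambda_1,\lambda_2]|\le M$, whence $|g(\mu)|\le M+\pi\eps$ and $|g(t)|\le M+2\pi\eps$ for $|t-\mu|\le\eps$. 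Now split according to $\dist(a,\mu)$. If $|a-\mu|\le2\eps$, then $[a,\mu]\subset[\mu-2\eps,\mu+2\eps]$ and $|A[a,\mu]|\le\sup_{[a,\mu]}|g|\le M+3\pi\eps<M+10\eps$. If one of the neighbours, say $\lambda$, lies between $\mu$ and $a$, then $A[\mu,a]$ is the convex combination of $A[\mu,\lambda]$ and $A[\lambda,a]$ with weights $|\lambda-\mu|/|a-\mu|\le1$ and its complement, and since $|A[\mu,\lambda]|\le M+2\pi\eps$ (short interval near $\mu$) and $|A[\lambda,a]|\le M$ ($\lambda,a\in\Lambda$), we get $|A[\mu,a]|\le M+2\pi\eps$. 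Finally, for large $|a-\mu|$ one replaces $\mu$ by a neighbour $\lambda$ and uses the elementary bound $|A[\mu,a]-A[\lambda,a]|\le 2\eps/\dist(a,[\lambda,\mu])$ (which only uses $\|g\|_\infty=1$), so $|A[\mu,a]|\le M+10\eps$ once this distance is $\ge1/5$. The case $a\in\Mu$ is reduced to the above by performing the neighbour-replacement first at $\mu$ and then at $a$.

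The main obstacle is the intermediate regime: a pair $(\mu,a)$ with $\mu\in\Mu$, all the $\eps$-close $\Lambda$-neighbours of $\mu$ on one side of $\mu$, and $a$ at distance between (a few) $\eps$ and $1/5$ with no point of $\Lambda$ strictly between $\mu$ and $a$. Here the $1/(b-a)$ singularity of the divided difference prevents the crude ``replace $\mu$ by a neighbour'' estimate from closing with constant $10$, and this is precisely the point at which Beurling's argument must be adapted: one has to combine the two-neighbour hypothesis with the exponential-type structure of $g$ — the bound $|A[\lambda,a]|\le M$ over such a short interval, for an entire function of type $\pi$ with bounded derivative, forces $g$ itself (not merely its average) to remain of size $M+O(\eps)$ across $[\mu,a]$ — so that the loss stays proportional to $\eps$. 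With that step in hand the bookkeeping above yields $|A[a,b]|\le M+10\eps$ for all pairs, hence $K(\Lambda\cup\Mu)^{-1}\le K(\Lambda)^{-1}+\eta+10\eps$; letting $\eta\to0$ and combining with the trivial inequality gives $|K(\Lambda)^{-1}-K(\Lambda\cup\Mu)^{-1}|<10\eps$.
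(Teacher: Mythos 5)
Your reduction to the inequality $\|F|_{\Lambda\cup\Mu}\|_{\li}\le \|F|_{\Lambda}\|_{\li}+10\eps$ for a fixed normalized $F$ is the right one, and the trivial monotonicity direction is handled correctly; but the gap you yourself flag in the ``intermediate regime'' is a genuine gap, and the way you propose to fill it does not work. The assertion that $|A[\lambda,a]|\le M$, for $g=F'$ of type $\pi$ with $\|g'\|_\infty\le\pi$, ``forces $g$ itself to remain of size $M+O(\eps)$ across $[\mu,a]$'' is false: a bound on the average of a $\pi$-Lipschitz function over an interval of length $\ell$ controls its supremum there only up to an error of order $\ell$, not of order $\eps$ (compare $g(t)=\sin\pi t$ on $[0,1/5]$, whose mean is about $0.30$ while its maximum is about $0.59$). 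Since in this regime $\ell=|a-\mu|$ ranges over $(C\eps, c)$ with $c$ an absolute constant, your two available estimates --- the pointwise one, costing $O(|a-\mu|)$, and the neighbour-replacement one, costing $O(\eps/|a-\mu|)$ --- meet only at an error of order $\sqrt{\eps}$, and no interpolation between them yields $O(\eps)$.

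What closes the argument (and is what the paper means by approximating divided differences by first derivatives and then applying Bernstein to the \emph{second} derivative) is the exact identity for divided differences: for distinct points $\lambda,\mu,a$,
\[
A[\mu,a]-A[\lambda,a]=(\mu-\lambda)\,F[\lambda,\mu,a]=(\mu-\lambda)\,\frac{F''(\theta)}{2}
\]
for some $\theta$ in the convex hull of $\{\lambda,\mu,a\}$, where $F[\lambda,\mu,a]$ is the second divided difference. By Bernstein applied to $F'$ one has $\|F''\|_\infty\le\pi\|F'\|_\infty=\pi$, so $|A[\mu,a]-A[\lambda,a]|\le\tfrac{\pi}{2}|\mu-\lambda|\le\tfrac{\pi}{2}\eps$ \emph{uniformly in} $a$. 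Hence each replacement of a point of $\Mu\setminus\Lambda$ by one of its $\eps$-neighbours in $\Lambda\setminus\Mu$ costs at most $\pi\eps/2$; doing this for each endpoint of an arbitrary pair in $\Lambda\cup\Mu$ (the hypothesis of two neighbours per point guarantees the two replacements can be chosen distinct, so one lands on a genuine pair in $\Lambda$) gives $\|F|_{\Lambda\cup\Mu}\|_{\li}\le\|F|_{\Lambda}\|_{\li}+\pi\eps$, and the claim follows with room to spare. This single identity replaces your entire case analysis, including the cases you did settle.
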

 
\begin{cor}
If $\Lambda$ is sampling then there is an $\eps>0$ and a subsequence 
$\Sigma\subset \Lambda$ which  is also sampling and such that $\sharp\Sigma\cap 
(x,x+\varepsilon)\le 2$  for each $x\in \R$. In other words  each sampling 
sequence $\La$ contains a sampling sequence $\Sigma$ which is a union of two 
separated sequences.   
\end{cor}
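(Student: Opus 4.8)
The plan is to thin $\La$ on a grid of mesh $\eps$, keeping a single point from most cells and a second point from the ``isolated'' ones, and then invoke the First stability result to conclude that the thinned sequence is still sampling once $\eps$ is small enough. Since $\La$ is sampling we have $K(\La)<\infty$, so we may fix $\eps>0$ with $20\,\eps<K(\La)^{-1}$. Put $I_k=[k\eps,(k+1)\eps)$, $k\in\Z$; say $I_k$ is \emph{active} if it meets $\La$, and say an active cell is \emph{isolated} if both neighbouring cells are free of points of $\La$. Form $\Sigma$ by selecting, from each active cell $I_k$, one point of $\La\cap I_k$, together with a second point of $\La\cap I_k$ whenever $I_k$ is isolated and contains at least two points of $\La$. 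Then $\Sigma\subset\La$.

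Two elementary geometric checks remain. First, $\#\{\Sigma\cap(x,x+\eps)\}\le2$ for all $x$: an open interval of length $\eps$ meets at most two consecutive cells, and if one of these is isolated the other carries no point of $\La$ at all, so in every case $\Sigma$ contributes at most two points to it; enumerating $\Sigma$ increasingly, this forces the gap between consecutive-but-one points to be $\ge\eps$, so $\Sigma$ is the union of two $\eps$-separated sequences. Second, every $\mu\in\La\setminus\Sigma$ has at least two $2\eps$-neighbours in $\Sigma$: such a $\mu$ lies in an active cell $I_k$ that must contain at least two points of $\La$ (otherwise its single point would be $\mu\in\Sigma$); if $I_k$ is isolated the two points selected there are both within $\eps$ of $\mu$, and if $I_k$ is not isolated one selected point of $I_k$ is within $\eps$ of $\mu$ while a point selected from the adjacent active cell is within $2\eps$.

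It then remains to apply the First stability result with $\Mu:=\La\setminus\Sigma$ — so that its ``$\La$'' and ``$\La\cup\Mu$'' are our $\Sigma$ and $\La$ — and with parameter $2\eps$: the second check above is exactly its hypothesis, so $|K(\Sigma)^{-1}-K(\La)^{-1}|<20\,\eps<K(\La)^{-1}$, which gives $K(\Sigma)<\infty$, i.e.\ $\Sigma$ is sampling, and the first check shows $\Sigma$ is a union of two separated sequences. The one delicate point is the handling of isolated cells: keeping a second point there is what lets deleted points retain two nearby neighbours, and this is harmless for the sparseness bound precisely because an isolated cell has empty neighbours; the rest is bookkeeping.
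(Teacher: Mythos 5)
Your argument is correct and is exactly the route the paper intends: the corollary is stated as an immediate consequence of the First stability result, and your cell-by-cell thinning (one point per active cell, two per isolated cell) together with the choice $20\eps<K(\Lambda)^{-1}$ supplies precisely the selection and the neighbour-count verification that the paper leaves implicit. No gaps.
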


\begin{claim}[Second stability result] 
Let $\Gamma_1, \Gamma_2 \subset \R$ be two separated sequences 
$\Gamma_1\cap\Gamma_2=\emptyset$  and $\Sigma=\Gamma_1\cup \Gamma_2$.  
Let also $\Gamma_2'$ be a separated sequence $\Gamma_1\cap\Gamma_2'=\emptyset$, 
$\Sigma'=\Gamma_1\cup \Gamma_2'$, and $\dist_H(\Gamma_2,\Gamma_2') < \eps$. 
Then 
\[
\left | \frac 1 {K(\Sigma)} - \frac 1 {K(\Sigma')} \right | \leq 10 \varepsilon.
\]
\end{claim}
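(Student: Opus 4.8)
The plan is to follow Beurling's original stability argument \cite[Chapter~IV, Thm.~2]{Beurling89a} almost verbatim, with the single modification indicated above: where Beurling controls the change of a \emph{value} $F(\la)$ under an $\varepsilon$-perturbation of $\la$ by $\sup_\R|F'|$, here one must control the change of a \emph{divided difference} under $\varepsilon$-perturbations of its two nodes, and this costs a factor $\sup_\R|F''|$, which Bernstein's inequality bounds by $\pi\sup_\R|F'|=\pi\|F\|_{\Ba}$ --- legitimately, since $F'\in\B$ by the very definition of $\Ba$. Concretely, first I would pass to the dual description $K(\Sigma)^{-1}=\inf\{\|F|_\Sigma\|_{\li}:F\in\Ba,\ \|F\|_{\Ba}=1\}$ (and likewise for $\Sigma'$), and observe that it suffices to prove the pointwise estimate
\[
\bigl|\,\|F|_\Sigma\|_{\li}-\|F|_{\Sigma'}\|_{\li}\,\bigr|\le 10\varepsilon,\qquad \|F\|_{\Ba}=1,
\]
since taking the infimum over such $F$ then yields both inequalities between $K(\Sigma)^{-1}$ and $K(\Sigma')^{-1}$. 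One may also assume $\varepsilon<1/10$: the integral representation below gives $\|F|_\Sigma\|_{\li}\le\sup_\R|F'|=1$, hence $K(\Sigma)^{-1}\le 1$, so the claim is trivial for large $\varepsilon$.

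The heart of the matter is the elementary estimate: if $\|F\|_{\Ba}=1$ and $a,b,a^*,b^*\in\R$ satisfy $a\neq b$, $a^*\neq b^*$, $|a-a^*|<\varepsilon$, $|b-b^*|<\varepsilon$, then
\[
\left|\frac{F(a)-F(b)}{a-b}-\frac{F(a^*)-F(b^*)}{a^*-b^*}\right|\le\pi\varepsilon.
\]
I would prove this by writing each divided difference as $\int_0^1 F'\bigl((1-t)b+ta\bigr)\,dt$; the two integration points $(1-t)b+ta$ and $(1-t)b^*+ta^*$ stay within distance $\varepsilon$ of each other for every $t\in[0,1]$, so the mean value theorem together with $\sup_\R|F''|\le\pi$ (Bernstein applied to $F'\in\B$) bounds the integrand pointwise by $\pi\varepsilon$.

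With this in hand, to bound $\|F|_{\Sigma'}\|_{\li}$ I would take any two distinct points of $\Sigma'=\Gamma_1\cup\Gamma_2'$: if both lie in the common part $\Gamma_1\subset\Sigma$, their divided difference already occurs in $\|F|_\Sigma\|_{\li}$; otherwise I replace each node lying in $\Gamma_2'$ by an $\varepsilon$-close node of $\Gamma_2$ (using $\dist_H(\Gamma_2,\Gamma_2')<\varepsilon$) and keep any node in $\Gamma_1$, which --- since $\Gamma_1\cap\Gamma_2=\emptyset$ and, for $\varepsilon$ below half the separation constant of $\Gamma_2'$, no two nodes collide --- produces a distinct pair in $\Sigma$; the displayed estimate together with the bound $\|F|_\Sigma\|_{\li}$ for the perturbed divided difference then shows the original one is at most $\|F|_\Sigma\|_{\li}+\pi\varepsilon$. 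Taking the supremum over all pairs gives $\|F|_{\Sigma'}\|_{\li}\le\|F|_\Sigma\|_{\li}+\pi\varepsilon$, and exchanging the roles of $\Gamma_2$ and $\Gamma_2'$ (the hypotheses are symmetric) gives the opposite inequality; since $\pi<10$, this proves the claim. I expect the only slightly delicate point to be the bookkeeping of this node-matching --- in particular ensuring the two perturbed nodes do not merge, which is harmless in the small-$\varepsilon$ regime and where the complementary regime is absorbed by the trivial bound $K^{-1}\le 1$ --- whereas the genuinely new analytic ingredient, Bernstein's inequality for the second derivative, is itself immediate.
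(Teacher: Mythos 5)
Your argument is correct in substance and is exactly what the paper intends: the paper gives no written proof of this claim beyond the remark that one should mimic Beurling's stability theorem, ``approximating the divided differences by first derivatives and applying Bernstein's theorem for the second derivative,'' and that is precisely your key estimate
\[
\left|\frac{F(a)-F(b)}{a-b}-\frac{F(a^*)-F(b^*)}{a^*-b^*}\right|\le\pi\varepsilon,
\qquad \frac{F(a)-F(b)}{a-b}=\int_0^1F'\bigl((1-t)b+ta\bigr)\,dt .
\]
The one point you have not fully closed is the collision regime: your node-matching needs $\varepsilon<\tfrac12\min\bigl(\mathrm{sep}(\Gamma_2),\mathrm{sep}(\Gamma_2')\bigr)$, whereas the trivial bound $K^{-1}\le 1$ only disposes of $\varepsilon\ge 1/10$, so the window in between --- where two distinct points of $\Gamma_2'$ may share a single $\varepsilon$-neighbour in $\Gamma_2$ and the perturbed ``divided difference'' degenerates to a derivative value $F'(\nu)$ that is not controlled by $\|F|_\Sigma\|_{\li}$ --- is covered by neither of your two cases. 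This is harmless for the way the claim is used (in the ensuing Corollary one chooses $\varepsilon$ as small as one likes, exactly as in Beurling), but since the statement as written carries no smallness hypothesis on $\varepsilon$, you should either impose one explicitly or remark that the claim is only needed, and only established, for $\varepsilon$ below half the separation constants.
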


\begin{cor}
If $\Lambda$ is a sampling sequence which is a union of two separated sequences, 
then, for some $\eps>0$  there is a separated sampling sequence    $\Lambda'$  
such that $\dist_H(\La,\La')<\eps$.
 \end{cor}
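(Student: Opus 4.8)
The plan is to leave $\Gamma_1$ untouched and perturb each point of $\Gamma_2$ by a tiny amount so as to push it away from $\Gamma_1$, and then read off that the perturbed union is still sampling from the Second Stability result.

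First I would record that $K(\Lambda)<\infty$, since $\Lambda$ is sampling, and fix $\delta>0$ smaller than $\eps/3$, than a quarter of each of the separation constants $\alpha_1,\alpha_2$ of $\Gamma_1,\Gamma_2$, and than $(20K(\Lambda))^{-1}$. Next I would define the perturbed sequence: for $\gamma\in\Gamma_2$ keep $\gamma'=\gamma$ when $\dist(\gamma,\Gamma_1)\ge\delta$, and otherwise relocate $\gamma$ to the point at distance exactly $\delta$ from its nearest neighbour $\lambda\in\Gamma_1$ on the same side, $\gamma'=\lambda+\delta\,\sgn(\gamma-\lambda)$; here $\lambda$ is unique because $2\delta<\alpha_1$. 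Write $\Gamma_2'=\{\gamma':\gamma\in\Gamma_2\}$ and $\Sigma'=\Gamma_1\cup\Gamma_2'$.

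Then I would verify three elementary geometric facts with this choice of $\delta$: (i) each point moves by less than $2\delta$, so $\gamma\mapsto\gamma'$ is injective and $\dist_H(\Gamma_2,\Gamma_2')\le 2\delta$; (ii) $\dist(\Gamma_1,\Gamma_2')\ge\delta$, since for a moved point $|\gamma'-\lambda|=\delta$ while $|\gamma'-\mu|\ge\alpha_1-\delta>\delta$ for every other $\mu\in\Gamma_1$; (iii) $\Gamma_2'$ is separated with constant at least $\alpha_2-4\delta>0$, because its points are perturbations of size $<2\delta$ of the $\alpha_2$-separated set $\Gamma_2$. Combining (ii) and (iii) with the separation of $\Gamma_1$, the union $\Sigma'$ is separated and $\Gamma_1\cap\Gamma_2'=\emptyset$; and by (i), $\dist_H(\Lambda,\Sigma')=\dist_H(\Gamma_1\cup\Gamma_2,\Gamma_1\cup\Gamma_2')\le\dist_H(\Gamma_2,\Gamma_2')\le 2\delta<\eps$.

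To finish I would apply the Second Stability result to $\Sigma=\Lambda=\Gamma_1\cup\Gamma_2$ and $\Sigma'=\Gamma_1\cup\Gamma_2'$, which gives $|K(\Lambda)^{-1}-K(\Sigma')^{-1}|\le 20\delta<K(\Lambda)^{-1}$; hence $K(\Sigma')^{-1}>0$, i.e.\ $K(\Sigma')<\infty$, so $\Sigma'$ is a separated sampling sequence within Hausdorff distance $\eps$ of $\Lambda$. The only point that needs a little care is the choice of $\delta$ in the perturbation step: it must be small compared with both separation constants so that relocating a point of $\Gamma_2$ to distance $\delta$ from its nearest $\Gamma_1$-neighbour neither collides with another point of $\Gamma_1$ nor spoils the internal separation of $\Gamma_2$. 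Granting that, the rest is just bookkeeping together with the quoted stability estimate and the finiteness of $K(\Lambda)$.
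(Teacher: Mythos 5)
Your argument is correct and is exactly the intended route: the paper states this corollary as an immediate consequence of the Second Stability result, and your perturbation of $\Gamma_2$ away from $\Gamma_1$ by a $\delta$ small compared with both separation constants and with $(20K(\Lambda))^{-1}$, followed by the stability estimate to conclude $K(\Sigma')^{-1}\geq K(\Lambda)^{-1}-20\delta>0$, is precisely that argument spelled out. The geometric bookkeeping (injectivity of the relocation, separation of $\Gamma_1\cup\Gamma_2'$, and the Hausdorff estimate for the union) all check out.
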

 
\begin{theorem}
$\Lambda$ is a sampling sequence for $\Ba$ if and only if  it contains a 
subsequence $\Sigma$ which is the union of two separated sequences with 
$D^{-}(\Sigma)>1$.
\end{theorem}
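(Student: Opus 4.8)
The plan is to prove the theorem in two directions, using the stability machinery and the reduction to the Bernstein space that have been set up above.

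For the ``if'' direction, suppose $\Lambda$ contains a subsequence $\Sigma$ that is a union of two separated sequences $\Gamma_1\cup\Gamma_2$ with $D^-(\Sigma)>1$. Since $\|a|_\Sigma\|_{\ell^\infty_1(\Sigma)}\le\|a|_\Lambda\|_{\ell^\infty_1(\Lambda)}$ (restricting to fewer points can only decrease divided differences), it is enough to show that $\Sigma$ itself is sampling for $\Ba$. The strategy here is the reverse of the one used in the two claims above: because $D^-(\Sigma)>1$ and $\Sigma$ is a finite union of separated sequences, classical Beurling theory gives that $\Sigma$ is a \emph{sampling sequence for $\B$}. Now given $F\in\Ba$, the derivative $F'$ lies in $\B$, and for any two consecutive points $\lambda_k<\lambda_{k+1}$ of (one of the separated subsequences of) $\Sigma$ the divided difference $\Delta_F(\lambda_k)=F'(\xi_k)$ for some $\xi_k$ in between, by the mean value theorem; more precisely one writes $\Delta_F(\lambda_k)=\frac{1}{\lambda_{k+1}-\lambda_k}\int_{\lambda_k}^{\lambda_{k+1}}F'(t)\,dt$, so the divided differences of $F$ over $\Sigma$ are averages of $F'$ over short intervals. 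Since $\Sigma$ is relatively dense with a fixed gap bound $R$, these short intervals, together with Bernstein's inequality applied to $F''\in\B$ (controlled by $\|F'\|_\B$), let one recover pointwise values of $F'$ on a relatively dense set from the divided differences with a uniform constant: $\sup|F'(\mu)|\lesssim \|F|_\Sigma\|_{\ell^\infty_1}+ R\,\|F''\|_\infty \lesssim \|F|_\Sigma\|_{\ell^\infty_1}$. Feeding this into the $\B$-sampling inequality for $F'$ closes the argument.

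For the ``only if'' direction, assume $\Lambda$ is sampling for $\Ba$. By the Corollary to the First stability result, $\Lambda$ contains a sampling subsequence $\Sigma$ which is a union of two separated sequences; so we may assume from the start that $\Sigma=\Gamma_1\cup\Gamma_2$ with $\Gamma_1,\Gamma_2$ separated and disjoint. It remains to show $D^-(\Sigma)>1$. Suppose not, $D^-(\Sigma)\le 1$. The idea is the standard Beurling comparison-sequence argument, now in the divided-difference setting: pick translates $\Sigma+x_k$ converging weakly to some $\Mu\in W(\Sigma)$ whose lower density is still $\le 1$, hence (by a gap-counting argument on one long interval) $\Mu$ is ``thin'' enough that it fails to be sampling for $\B$ in a quantitative way. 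Using the Second stability result, the sampling constants $K(\Sigma+x_k)=K(\Sigma)$ pass to the limit: $K(\Mu)\le\liminf K(\Sigma+x_k)<\infty$, so $\Mu$ would be sampling for $\Ba$. But one can then run the reduction already proved in the two Claims at the start of the section (a separated, or two-separated, sampling sequence for $\Ba$ is sampling for $\B$), which would force $D^-(\Mu)>1$ by Beurling's theorem — a contradiction. Hence $D^-(\Sigma)>1$.

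The main obstacle is the passage to weak limits for the divided-difference functional and the verification that the limit sequence $\Mu$ genuinely inherits the sampling property with a controlled constant; this is exactly what the Second stability result is for, but one must be careful that the splitting $\Sigma=\Gamma_1\cup\Gamma_2$ survives under translation and weak limit (the separation constants are translation-invariant, so each $\Gamma_i+x_k$ has a weakly convergent subsequence to a separated limit, and $\Mu$ is the union of the two limits). A secondary technical point is the quantitative step in the ``if'' direction: extracting $\sup|F'|$ on a relatively dense set from the averaged quantities $\Delta_F$, which needs Bernstein's inequality for the second derivative together with the uniform gap bound coming from relative density — the same mechanism indicated in the remark preceding the First stability result. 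Once these two limiting/approximation ingredients are in place, the theorem follows by combining them with Beurling's classical description of sampling sequences for $\B$.
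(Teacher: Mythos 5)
Your ``only if'' direction is essentially the paper's: it is, as the paper says, a compilation of the two stability results together with the claim that a separated sampling sequence for $\Ba$ is sampling for $\B$ (and hence has $D^->1$ by Beurling). The genuine gap is in the ``if'' direction, at two points. First, the assertion that ``classical Beurling theory gives that $\Sigma$ is a sampling sequence for $\B$'' is false: Beurling's criterion for $\B$ requires a \emph{separated} subsequence of lower density $>1$, and a union of two separated sequences with $D^-(\Sigma)>1$ need not contain one. Take $\Gamma_1=\Z$ and $\Gamma_2=\{n+1/(|n|+1)\}_{n\in\Z}$: then $D^-(\Sigma)=2$, but every separated subsequence of $\Sigma$ has density at most $1$, the weak limits of translates of $\Sigma$ are $\Z$ as a point set, and $\Sigma$ is \emph{not} sampling for $\B$. (This is precisely why the theorem for $\Ba$ is phrased in terms of unions of two separated sequences: the near-coincident pairs carry derivative information through the divided differences, which is useless for $\B$ but essential for $\Ba$.) Second, your quantitative step $\sup|F'|\lesssim \|F|_\Sigma\|_{\ell^\infty_1}+R\|F''\|_\infty\lesssim\|F|_\Sigma\|_{\ell^\infty_1}$ is circular: by Bernstein, $\|F''\|_\infty\le\pi\|F'\|_\infty=\pi\|F\|_{\Ba}$, which is exactly the quantity you are trying to bound, so the absorption closes only when $\pi R<1$ --- far stronger than the gap bound that $D^-(\Sigma)>1$ actually provides. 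Relatedly, the intermediate points $\xi_k$ produced by the mean value theorem are not under your control and need not form a sampling set for $\B$ (in the example above they cluster into effective double points of density $1$), so the final appeal to ``the $\B$-sampling inequality for $F'$'' does not go through either. No direct absorption argument of this kind can work: even Beurling's theorem for $\B$ is not provable that way, since the sampling constant blows up as the density decreases to $1$.

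The paper's sufficiency proof sidesteps all of this by invoking the compactness form of Beurling's theorem: it suffices to show that every weak limit $\Sigma'\in W(\Sigma)$ is a uniqueness set for $\Ba$. Reducing to real-valued $F\in\Ba$ with vanishing increments on $\Sigma'$, Rolle's theorem places a zero of $F'$ between any two consecutive points of $\Sigma'$ (with derivatives replacing divided differences at the double points that may appear in the limit); the resulting zero set of $F'\in\B$ has density greater than $1$, forcing $F'\equiv 0$. You should replace your quantitative argument by this uniqueness argument; the averaging identity $\Delta_F(\lambda_k)=\frac{1}{\lambda_{k+1}-\lambda_k}\int_{\lambda_k}^{\lambda_{k+1}}F'$ is still the right starting observation, but it must be fed into the weak-limit machinery rather than into a pointwise estimate.
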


\begin{proof}
The \emph{necessity} part is just a compilation of the previous claims. 

Now let $D^{-}(\Sigma)>1$ and $\Sigma$ be a union  of two  separated sequences.
We follow the arguments in \cite[Chapter IV, theorem 3]{Beurling89a}: it suffices to prove that each 
$\Sigma'\in W(\Sigma)$ is a uniqueness set for $\Ba$. 

Take any $\Sigma'\in W(\Sigma)$. We still have $D^-(\Sigma)>1$, here one has to 
count points according to their multiplicities, some points in a weak limit may 
have  multiplicity two. The corresponding divided difference should be replaced 
by the derivative then. It suffices to prove uniqueness  for functions $F\in 
\Ba$ such that $F(x)\in \R$ for all $x\in \R$. Each such function has zero 
increments on $\Sigma'$ then its derivative is vanishing at some intermediate 
points, the set of intermediate points has density bigger than one so $F'=0$.
 \end{proof}

 \begin{cor} If $\Lambda$ is sampling for $B_\pi^1$ then there is
an $\varepsilon>0$ such that $(1+\varepsilon)\Lambda$ is still sampling for
$B_\pi^1$.
 \end{cor}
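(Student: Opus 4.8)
The plan is to deduce this from the geometric characterization of sampling sequences proved just above, using only the elementary way in which separation and lower Beurling density transform under a dilation. In fact, since sampling is characterized by a \emph{strict} density inequality, there is automatically room to make the sequence slightly sparser.

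First I would apply the preceding theorem: as $\Lambda$ is sampling for $\Ba$, it contains a subsequence $\Sigma\subset\Lambda$ which is a union of two separated sequences, $\Sigma=\Gamma_1\cup\Gamma_2$ with $\Gamma_1\cap\Gamma_2=\emptyset$, and $D^-(\Sigma)>1$. Put $\delta:=D^-(\Sigma)-1>0$. For $\varepsilon>0$ consider the dilate $(1+\varepsilon)\Sigma=(1+\varepsilon)\Gamma_1\cup(1+\varepsilon)\Gamma_2$, which is a subsequence of $(1+\varepsilon)\Lambda$ because $\Sigma\subset\Lambda$. Each $(1+\varepsilon)\Gamma_j$ is again separated, with separation constant $(1+\varepsilon)$ times that of $\Gamma_j$, and the two dilated sequences remain disjoint. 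It then only remains to track the density.

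Next I would record the scaling identity $D^-(c\Lambda)=c^{-1}D^-(\Lambda)$ for $c>0$, which follows at once from the definition of $D^-$ after the substitution $x\mapsto cx$, $R\mapsto cR$ (and likewise for $D^+$). Applying it with $c=1+\varepsilon$ gives $D^-\bigl((1+\varepsilon)\Sigma\bigr)=(1+\varepsilon)^{-1}D^-(\Sigma)=(1+\varepsilon)^{-1}(1+\delta)$. Hence, for any $\varepsilon$ with $0<\varepsilon<\delta$ we still have $D^-\bigl((1+\varepsilon)\Sigma\bigr)>1$.

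Finally, $(1+\varepsilon)\Lambda$ contains the subsequence $(1+\varepsilon)\Sigma$, which is a union of two separated sequences with lower Beurling density exceeding one; by the theorem above, $(1+\varepsilon)\Lambda$ is sampling for $\Ba$, which is the claim. I do not expect any genuine obstacle in this argument; the only point deserving a moment's care is the dilation formula for $D^-$ and the observation that dilation does not spoil separation or disjointness.
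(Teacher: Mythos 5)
Your argument is correct and is precisely the intended deduction from the characterization theorem stated just above the corollary (the paper itself leaves this step to the reader): extract $\Sigma=\Gamma_1\cup\Gamma_2$ with $D^-(\Sigma)>1$, use that dilation by $1+\varepsilon$ preserves separation and disjointness and scales $D^-$ by $(1+\varepsilon)^{-1}$, and choose $\varepsilon$ small enough that the strict inequality survives. No gaps.
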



\section{Interpolation theorem. Necessity.}\label{sect:interpolating1}

Let $\Lambda$ be an interpolating sequence for $\Ba$. First we prove that 
$\Lambda=\Lambda_1\cup \Lambda_2 $  where $\Lambda_1,\Lambda_2$ are separated
sequences, $\Lambda_1\cap \Lambda_2=\emptyset$.  This follows from the simple
statement below.

\begin{claim}  There is $\eps >0$ such that, for each $\lambda\in \Lambda$ 
\[
\#(\Lambda\cap \{\zeta; |\zeta-\lambda|<\eps|\})\leq 2.
\] 
\end{claim}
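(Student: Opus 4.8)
The claim asserts that an interpolating sequence for $\Ba$ cannot have three points clustering too closely together. The plan is to proceed by contradiction: suppose no such $\eps$ works, so for every $n$ there is a triple of points $\lambda_n^{(1)}<\lambda_n^{(2)}<\lambda_n^{(3)}$ in $\Lambda$ all lying in an interval of length $1/n$. Translating by $-\lambda_n^{(2)}$, we obtain three points of $\Lambda - \lambda_n^{(2)}$ in $(-1/n,1/n)$; the idea is that a genuinely interpolating sequence would have to be able to assign wildly large divided differences across such a short gap, contradicting the bounded interpolation constant $K_0(\Lambda)$.

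First I would fix such a triple $\lambda_1<\lambda_2<\lambda_3$ with $\lambda_3-\lambda_1 < \delta$ (small $\delta$ to be chosen), and define the sequence $a\in\li(\Lambda)$ by prescribing $a(\lambda_1)=a(\lambda_3)=0$ and $a(\lambda_2)=\lambda_2-\lambda_1$, and $a\equiv 0$ on all other points of $\Lambda$. The key point is to check that $\|a\|_{\li(\Lambda)}$ stays bounded independently of $\delta$: divided differences between $\lambda_2$ and any other point $\mu$ are $(\lambda_2-\lambda_1)/(\lambda_2-\mu)$, and since $|\lambda_2 - \lambda_1|\le |\lambda_2-\mu|$ for any $\mu\notin\{\lambda_1,\lambda_3\}$ (as the $\lambda_i$ are the three closest) — actually one has to be a little careful, but using separation of $\Lambda$ away from the cluster plus the smallness of the cluster, $\|a\|_{\li}\lesssim 1$. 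Then interpolation yields $F\in\Ba$ with $F(\lambda_i)=a(\lambda_i)$ and $\|F\|_{\Ba}\le C$. Now the divided difference of $F$ on $[\lambda_1,\lambda_2]$ equals $1$ and on $[\lambda_2,\lambda_3]$ equals $-(\lambda_2-\lambda_1)/(\lambda_3-\lambda_2)$; by the mean value theorem $F'$ takes the value $1$ somewhere in $(\lambda_1,\lambda_2)$ and the value $-(\lambda_2-\lambda_1)/(\lambda_3-\lambda_2)$ somewhere in $(\lambda_2,\lambda_3)$. If the second ratio is large this already contradicts $\|F\|_{\Ba}\le C$; if it is not large, then $F'$ changes by a definite amount over an interval of length $<\delta$, so $F''$ is large somewhere — and since $F'\in\B$, Bernstein's inequality gives $\|F''\|_\infty\le\pi\|F'\|_\infty\le\pi C$, a contradiction once $\delta$ is chosen small enough.

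The main obstacle I anticipate is bookkeeping the estimate $\|a\|_{\li(\Lambda)}\lesssim 1$ cleanly: one must ensure the prescribed bump at $\lambda_2$ does not create a large divided difference against nearby points of $\Lambda$ outside the triple. This is handled by noting that, by passing to a subsequence, we may assume the separation constant of $\Lambda\setminus\{\lambda_1,\lambda_2,\lambda_3\}$ from $\lambda_2$ is bounded below (or simply that $\lambda_1,\lambda_3$ are genuinely the two nearest neighbors of $\lambda_2$, which can be arranged), so that $|a(\lambda_2)-a(\mu)|/|\lambda_2-\mu| = |\lambda_2-\lambda_1|/|\lambda_2-\mu| \le 1$ for all other $\mu$. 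The rest is a clean two-case argument driven by whether $(\lambda_2-\lambda_1)/(\lambda_3-\lambda_2)$ is large, combined with the crucial structural input that $F'$ itself lies in the Bernstein space so its derivative is controlled — this is exactly where the space $\Ba$ (rather than a generic Lipschitz space) enters and makes the three-point clustering impossible.
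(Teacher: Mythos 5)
Your overall strategy is exactly the paper's: assume three points of $\Lambda$ fall in an interval of length $\eps$, interpolate suitable data of unit $\li$ norm, apply the mean value theorem twice to force $F''$ to be of size $\eps^{-1}$ somewhere, and contradict Bernstein's inequality $\|F''\|_{\B}\le\pi\|F'\|_{\B}=\pi\|F\|_{\Ba}\le\pi K_0(\Lambda)$. However, your choice of interpolation data contains a genuine gap. With $a(\lambda_1)=a(\lambda_3)=0$ and $a(\lambda_2)=\lambda_2-\lambda_1$, the divided difference on $[\lambda_2,\lambda_3]$ equals $-(\lambda_2-\lambda_1)/(\lambda_3-\lambda_2)$, and nothing prevents $\lambda_3-\lambda_2$ from being much smaller than $\lambda_2-\lambda_1$. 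Hence $\|a\|_{\li(\Lambda)}$ is \emph{not} bounded independently of the configuration: your verification only treats $\mu\notin\{\lambda_1,\lambda_3\}$, and $\mu=\lambda_3$ is precisely the bad pair, one that no choice of extension outside the triple can repair. This breaks the first branch of your case analysis: when the ratio $r=(\lambda_2-\lambda_1)/(\lambda_3-\lambda_2)$ is large you have no bound $\|F\|_{\Ba}\le C$ to contradict, since the interpolation estimate only gives $\|F\|_{\Ba}\le K_0\|a\|_{\li}$, which is itself of size $r$; a large divided difference of $F$ is then perfectly consistent.

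The repair is small. Either normalize your data by $\|a\|_{\li}$, after which the two divided differences adjacent to $\lambda_2$ become $1/\max(1,r)$ and $-r/\max(1,r)$, whose difference is still at least $1$, so $F'$ varies by at least $1$ over an interval of length less than $\eps$; or, as the paper does, prescribe the symmetric data $a(\lambda_1)=\lambda_2-\lambda_1$, $a(\lambda_2)=0$, $a(\lambda_3)=\lambda_3-\lambda_2$, whose two adjacent divided differences are exactly $-1$ and $+1$ regardless of the relative spacing and whose pairwise differences within the triple are all at most $1$ in modulus, so a Lipschitz extension gives $\|a\|_{\li}=1$ and then $\|F''\|_{\B}\ge 2/\eps$. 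With either fix the remainder of your argument --- the double mean value theorem plus Bernstein applied to the second derivative, which is indeed where the structure of $\Ba$ enters --- goes through exactly as in the paper.
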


\begin{proof} 
Indeed, if this were not the case then for each $\eps >0$ there would exist 
points $\lambda_{k-1},\lambda_k,\lambda_{k+1}\in \Lambda$, 
$\lambda_{k+1}-\lambda_{k-1}< \eps$. One can construct $a\in \li$, $\|a\|_\li=1$ 
so that $a(\lambda_k)=0$, $a(\lambda_{k-1})= \lambda_k-\lambda_{k-1}$, 
$a(\lambda_{k+1})=\lambda_{k+1}-\lambda_k$. Let $F\in \Ba$ be a solution of 
\eqref{interpolation}  satisfying $\|F\|_\Ba \leq K_0(\Lambda)$ and, hence, 
$\|F''\|_\B \leq \pi K_0(\Lambda)$. On the other hand the choice of 
interpolation data yields $\|F''\|_\B \geq \eps^{-1}$, so $\eps $ cannot be 
chosen arbitrary small.
\end{proof}
We can now split the sequence $\Lambda$ into blocks  $\Lambda=\cup_j
\Lambda^{(j)}$   containing    at most two points each: either $\Lambda^{(j)}=
\{\lambda_{k_j}\}$ or $\Lambda^{(j)}= \{\lambda_{k_j}, \lambda_{k_j+1}\}$, in
addition $k_j<k_{j+1}$ and $\mbox{dist}(\Lambda^{(j)}, \Lambda^{(j+1)}) > a>0$
for all $j\in \Z$.
 
 Denote $\Gamma^{(j)}=\Lambda^{(j)}$  if $\#\Lambda^{(j)}=1$ and
$\Gamma^{(j)}=\{\lambda_{k_j}, \lambda_{k_j+1}+\eps\}$ otherwise,
the number $\eps$ will be chosen later. Let
 $\Gamma=\cup_j \Gamma^{(j)} =\{\mu_k\}$ with  enumeration 
corresponding to those of $\La$.  
 
 \begin{claim}
 If  $\eps$ is sufficiently small then $\Gamma$ is an interpolating sequence 
for
$\Ba$.
\end{claim}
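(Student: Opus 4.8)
The plan is to show that $\Gamma$ remains interpolating by viewing it as a small perturbation of the interpolating sequence $\Lambda$ and transferring solutions back and forth. Given interpolation data $a\in\li(\Gamma)$, I would first transplant it to data $b\in\li(\Lambda)$: on singleton blocks set $b(\lambda_{k_j})=a(\mu)$, and on a paired block $\{\lambda_{k_j},\lambda_{k_j+1}\}$ set $b(\lambda_{k_j})=a(\mu_{k_j})$ and choose $b(\lambda_{k_j+1})$ so that the divided difference of $b$ over the original pair equals (up to controlled error) the divided difference of $a$ over the perturbed pair $\{\lambda_{k_j},\lambda_{k_j+1}+\eps\}$; concretely $b(\lambda_{k_j+1})=a(\mu_{k_j})+(\lambda_{k_j+1}-\lambda_{k_j})\,\Delta_a(\mu_{k_j})$. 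Since the gaps $\lambda_{k_j+1}-\lambda_{k_j}$ and $\lambda_{k_j+1}+\eps-\lambda_{k_j}$ are comparable (both $O(\eps)$, with ratio $1+O(1)$ bounded away from $0$ and $\infty$ for $\eps$ small relative to the separation $\alpha$), one checks $\|b\|_{\li(\Lambda)}\lesssim\|a\|_{\li(\Gamma)}$, the implied constant independent of $\eps$.

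Next, since $\Lambda$ is interpolating for $\Ba$, pick $F\in\Ba$ with $F(\lambda_k)=b(\lambda_k)$ and $\|F\|_{\Ba}\le K_0(\Lambda)\,\|b\|_{\li(\Lambda)}$. This $F$ already interpolates $a$ on every singleton block and at the left endpoint of every paired block; it fails to interpolate $a$ only at the shifted right endpoints $\lambda_{k_j+1}+\eps$. The idea is to correct $F$ by an additive term $G$ supported (in influence) near those points. Using the sampling/interpolation machinery developed for $\Ba$ — in particular that $\Gamma$, being a union of two separated sequences with $D^+(\Gamma)=D^+(\Lambda)<1$, admits interpolation with control — one constructs $G\in\Ba$ with $G(\mu_k)=a(\mu_k)-F(\mu_k)$ on the correction set and $G=0$ on the rest of $\Gamma$, with $\|G\|_{\Ba}\lesssim \sup_j|a(\mu_{k_j+1})-F(\lambda_{k_j+1}+\eps)|$. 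The key estimate is that this sup is $O(\eps)\,\|a\|_{\li(\Gamma)}$: indeed $F(\lambda_{k_j+1}+\eps)-F(\lambda_{k_j+1})=\int_{\lambda_{k_j+1}}^{\lambda_{k_j+1}+\eps}F'$, which is at most $\eps\|F'\|_\B\lesssim\eps K_0(\Lambda)\|a\|_{\li(\Gamma)}$, while $F(\lambda_{k_j+1})=b(\lambda_{k_j+1})$ differs from $a(\mu_{k_j+1})$ by the built-in discrepancy between the two divided differences, again $O(\eps)$.

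Then $F+G$ interpolates $a$ on all of $\Gamma$, so $\Gamma$ is interpolating. (One may also run this as a Neumann-series/contraction argument: the map $a\mapsto$ (residual after interpolating via $\Lambda$) has norm $O(\eps)$ on $\li(\Gamma)$, hence is invertible for $\eps$ small, giving both the interpolation property and the estimate $\|F\|_{\Ba}\lesssim\|a\|_{\li(\Gamma)}$ with constant uniform in small $\eps$.) I expect the main obstacle to be the construction of the correcting function $G$ with the right norm bound: unlike classical Lagrange interpolation, a localized correction in $\Ba$ must be built through the $\dbar$-interpolation scheme the paper sets up, and one must ensure that prescribing a small value at each shifted node while keeping zero divided differences elsewhere does not blow up the $\Ba$-seminorm — this is exactly where the hypothesis $D^+(\Lambda)<1$ and the union-of-two-separated-sequences structure of $\Gamma$ are used, and where the analogy with \cite{Beurling89a} and \cite{LM05} has to be made quantitative.
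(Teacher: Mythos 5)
Your parenthetical Neumann-series remark is essentially the paper's actual argument, but the way you set up the key estimate does not close, and your primary route (a one-shot localized correction $G$) has two genuine problems. First, circularity: you construct $G$ by invoking interpolation with control on $\Gamma$ as a union of two separated sequences with $D^+(\Gamma)<1$ --- but $\Gamma$ being interpolating is precisely the statement to be proved, and at this stage of the necessity argument one does not yet know $D^+(\La)<1$; that density bound is the conclusion the whole section is working toward. Second, the claimed bound $\|G\|_{\Ba}\lesssim\sup_j|r_j|$ with $r_j=a(\mu_{k_j+1})-F(\lambda_{k_j+1}+\eps)$ is false: $G$ must vanish at $\lambda_{k_j}$ and equal $r_j$ at $\mu_{k_j+1}=\lambda_{k_j+1}+\eps$, two points whose distance can be comparable to $\eps$, so necessarily $\|G'\|_\infty\ge |r_j|/(\lambda_{k_j+1}+\eps-\lambda_{k_j})$, which for $r_j\simeq\eps\|a\|$ is of order $\|a\|$, not $O(\eps)\|a\|$.

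The same loss breaks the contraction as you state it. Your transplant is fine (it matches divided differences on the paired blocks, exactly as the paper's $c^{(p)}$ does), but you measure the residual pointwise, whereas the norm on $\li(\Gamma)$ is a sup of \emph{divided differences}: the residual is $0$ at $\lambda_{k_j}$ and $O(\eps)\|a\|$ at $\mu_{k_j+1}$, and dividing by the gap $\lambda_{k_j+1}+\eps-\lambda_{k_j}$, which may itself be of order $\eps$, yields only $O(1)\|a\|$ --- no small parameter. The paper's proof handles exactly this point: the divided difference of the residual over the pair $(\lambda_{k_j},\mu_{k_j+1})$ is rewritten as the difference of two divided differences of the same function $F_p$ over the overlapping intervals $(\lambda_{k_j},\lambda_{k_j+1})$ and $(\lambda_{k_j},\lambda_{k_j+1}+\eps)$, and this difference is estimated via the mean value theorem together with Bernstein's inequality for $F_p''$ (a careful version gives a bound $\lesssim\eps\,\|F_p''\|_\infty\le\pi\eps\|F_p\|_{\Ba}$, uniformly in the gap length). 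That produces the genuine contraction $\|a^{(p+1)}\|_{\li(\Gamma)}\le q\|a^{(p)}\|_{\li(\Gamma)}$ and a convergent series $F=\sum_p F_p$ (normalized by $F_p(\lambda_0)=0$). So: keep the iteration, discard the correction function $G$ entirely, and redo the residual estimate at the level of divided differences rather than values.
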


\noindent{\em Proof}. 
Let for definiteness $\la_0\in \La\cap \Gamma$.  Given any $a\in
\li(\Gamma)$
we may assume $a(\la_0)=0$.  We use induction to construct the sequences
$a^{(p)}\in \li (\Gamma)$,   $c^{(p)}\in \li (\Lambda)$  $p=0,1,\ldots $.

Set $a^{(0)}=a$. Given $a^{(p)}$ construct $c^{(p)}\in \li(\La)$ such that
$c^{(p)}(\la_0)=0$,
\[
\frac{a^{(p)}(\mu_{k+1})-a^{(p)}(\mu_k)}{\mu_{k+1}-\mu_k}=
   \frac{c^{(p)}(\lambda_{k+1})-c^{(p)}(\lambda_k)}{\lambda_{k+1}-\lambda_k}
\] 
and let $F_p\in \Ba$ solve the interpolation problem
 $F_p(\la_k)=c^{(p)}(\la_k)$ and $\|F_p\|_\Ba \leq K_0(\Lambda)
\|a^{(p)}\|_{\li(\Gamma)}$.
 Further let 
 $$
 a^{(p+1)}(\mu)=a^{(p)}(\mu)-F_p(\mu), \ \mu \in \Gamma.
 $$
 
 We claim that for sufficiently small $\eps$ there is $q\in (0,1)$, such that
$qK_0(\Lambda)<1$ and
 \begin{equation}
 \label{contraction}
 \|a^{(p+1)}\|_{\li(\Gamma)} \leq q \|a^{(p)}\|_{\li(\Gamma)}, \
\|F_{p+1}\|_\Ba  <
qK_0(\Lambda) \|F_p\|_\Ba.
 \end{equation} 
  
If this is proved we will use that $F_p(\la_0)=0$ for all $p$'s, hence the 
series $F=\sum F_p$ converges on each compact set in $\C$ and delivers a solution to 
  the interpolation problem
  \[
  F(\mu)=a(\mu), \ \mu \in \Gamma, \ F\in \Ba.
  \]
  
It remains to prove \eqref{contraction}.  Without loss of generality we may
assume that all data $a^{(p)}$ are real  and also the functions $F_p$ are real
on $\R$.  Let $\mu_k=\la_k$, $\mu_{k+1}=\la_{k+1}+\eps$ We have

\begin{equation}
\label{eq2:1}
\begin{aligned}
&\frac{a^{(p+1)}(\mu_{k+1})-a^{(p+1)}(\mu_k)}{\mu_{k+1}-\mu_k} =\\
&=\frac{a^{(p)}(\mu_{k+1})-a^{(p)}(\mu_k)}{\mu_{k+1}-\mu_k}-
    \frac{F_p(\mu_{k+1})-F_p(\mu_k)}{\mu_{k+1}-\mu_k}= \\
& =\frac{c^{(p)}(\la_{k+1})-c^{(p)}(\la_k)}{\la_{k+1}-\la_k} -  
\frac{F_p(\mu_{k+1})-F_p(\mu_k)}{\mu_{k+1}-\mu_k}=\\
&=\frac{F_p(\la_{k+1})-F_p(\la_k)}{\la_{k+1}-\la_k}  -
\frac{F_p(\mu_{k+1})-F_p(\mu_k)}{\mu_{k+1}-\mu_k}.
\end{aligned}
\end{equation}
Let  $\la_{k+1}-\la_k< 1/10\pi$. Then  
\[
 \frac{a^{(p+1)}(\mu_{k+1})-a^{(p+1)}(\mu_k)}{\mu_{k+1}-\mu_k}=
F_p'(\tilde{\la}_k)-F_p'(\tilde{\mu}_k),
\]
for some $\tilde{\la}_k\in (\la_k,\la_{k+1})$, $\tilde{\mu}_k\in (\lambda_k,
\la_{k+1}+\eps)$. For $\eps< 1/10\pi$ we obtain
\[
|\tilde{\mu}_k-\tilde{\la}_k| \leq \frac 1 {5\pi} \ \mbox{and} \ \left |
\frac{a^{(p+1)}(\mu_{k+1})-a^{(p+1)}(\mu_k)}{\mu_{k+1}-\mu_k} \right | < \frac 1
{5}  \|F_p\|_\Ba.
\]

In the case  $\la_{k+1}-\la_k\geq 1/10\pi$ we have 
\begin{multline*}
 \frac{a^{(p+1)}(\mu_{k+1})-a^{(p+1)}(\mu_k)}{\mu_{k+1}-\mu_k} = \\
 \frac{F_p(\la_{k+1})-F_p(\la_k)}{\mu_{k+1}-\mu_k}\frac{\mu_{k+1}-
\la_{k+1}}{\la_{k+1}-\la_k} -
           \frac{F_p(\la_{k+1})- F_p(\mu_{k+1})}{\mu_k-\la_k}
\end{multline*}
and an explicit estimate shows
\[
\left | \frac{a^{(p+1)}(\mu_{k+1})-a^{(p+1)}(\mu_k)}{\mu_{k+1}-\mu_k} \right |
\lesssim \eps \|F\|_\Ba. 
\]
 Relation \eqref{contraction}  now follows.   \qed

\begin{cor} Without loss of generality we can assume that the
sequence $\Lambda$ is separated.
\end{cor}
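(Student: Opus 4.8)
The plan is to take $\Gamma=\bigcup_j\Gamma^{(j)}$ to be \emph{exactly} the sequence constructed above, and to observe that for a suitable choice of $\eps$ it is simultaneously separated, interpolating, and of the same density as $\Lambda$; the corollary is then just the statement that in the remaining part of the necessity argument one may replace $\Lambda$ by $\Gamma$.

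First I would pin down $\eps$. Besides being small enough for the contraction estimate \eqref{contraction} to hold, we also require $\eps<a$, where $a>0$ is the gap between consecutive blocks $\Lambda^{(j)}$. With such an $\eps$ the sequence $\Gamma$ is separated: inside a two–point block $\Gamma^{(j)}=\{\lambda_{k_j},\lambda_{k_j+1}+\eps\}$ the two points are at distance $\lambda_{k_j+1}+\eps-\lambda_{k_j}>\eps$ since $\lambda_{k_j+1}>\lambda_{k_j}$, while the distance between points of two consecutive blocks is at least $a-\eps>0$, because only the rightmost point of a block is ever displaced and only by at most $\eps$. Hence $\Gamma$ has separation constant at least $\min(\eps,a-\eps)>0$, and by the previous claim it is interpolating for $\Ba$.

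Next I would check that passing from $\Lambda$ to $\Gamma$ does not change the Beurling densities. Each point of $\Lambda$ is moved by at most $\eps$, so for any interval $I=(x,x+R)$ the symmetric difference between $\Lambda\cap I$ and $\Gamma\cap I$ is contained in the two $\eps$–neighbourhoods of the endpoints of $I$; since $\Gamma$ is separated and $\Lambda$ is a union of two separated sequences, each such neighbourhood contains a number of points bounded in terms of $\eps$ and $a$ only. Consequently $\bigl|\#(\Lambda\cap I)-\#(\Gamma\cap I)\bigr|$ is bounded uniformly in $x$ and $R$, whence $D^+(\Lambda)=D^+(\Gamma)$ and $D^-(\Lambda)=D^-(\Gamma)$.

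Combining these observations yields the corollary: the ``union of two separated sequences'' part of the necessity statement of Theorem~\ref{thm:inter} has already been proved for $\Lambda$, and the remaining assertion $D^+(\Lambda)<1$ is equivalent to $D^+(\Gamma)<1$ for the \emph{separated} interpolating sequence $\Gamma$. Therefore it suffices to establish the density bound under the extra hypothesis that the interpolating sequence is separated, which is precisely the claimed reduction. I do not expect a genuine obstacle here; the only points requiring a little care are the coordinated choice of $\eps$ (it must beat both the contraction threshold of \eqref{contraction} and the block gap $a$) and the remark that $\Gamma$ is \emph{honestly} separated, so that its point counts are set cardinalities rather than multiplicities.
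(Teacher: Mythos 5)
Your proposal is correct and is exactly the reduction the paper intends: the corollary is stated without proof as an immediate consequence of the preceding claim, and you have simply spelled out the implicit details (choosing $\eps$ below both the contraction threshold and the block gap $a$, checking that $\Gamma$ is separated with constant $\min(\eps,a-\eps)$, and noting that an $\eps$-perturbation of points preserves the Beurling densities). No gaps.
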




We will prove that if $\La$ is an interpolating set for $\Ba$, then one can 
refer to the classical Beurling result in order to get $D^+(\Lambda)<1$.
 
It suffices to construct a sequence of functions $\{f_\la\}_{\la \in \La}
\subset \B$  such that
\begin{equation}
\label{eq:delta}
f_\la(\mu)=\delta_{\la, \mu}, \ \mu \in \La,
\end{equation}
and
\begin{equation}
\label{eq:estimate}
|f_\la(x)|\lesssim \frac {1}{|x-\la|^2+1}, \ x \in \R.
\end{equation}

Then $\La$ will be an interpolating sequence for $\B$ since the solution to the
interpolation problem 
\[
F(\la)=b(\la); \ b \in \ell^\infty(\La), \ F \in \B,
\]
can be achieved by the function
\[
F_b= \sum b(\la) f_\la, 
\]
and according to \cite[Chaprer V, Theorem 1]{Beurling89a}  $D^+(\La)<1$.

\medskip 

The construction of the functions  $\{f_\la\}_{\la \in \La} $ relies on the
following statement:

\begin{claim}
\label{extrapoint}
Let $\Lambda$ be a separated interpolating sequence for $\Ba$.
For each $\xi\in \R\setminus \La$ the sequence $\La\cup \{\xi\}$ is also
interpolating for $\Ba$. Moreover the constant
$K_0(\La\cup \{\xi\})$ depends only on $\dist(\La, \xi)$.
\end{claim}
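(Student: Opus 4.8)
The plan is to reduce the problem of interpolating on $\La\cup\{\xi\}$ to the already-available interpolation property of $\La$, by first "absorbing" the constraint at $\xi$ into a correction function and then solving the residual problem on $\La$. Fix $a\in\li(\La\cup\{\xi\})$; by subtracting an affine function (which lies in $\Ba$ and does not change divided differences) we may assume $a(\xi)=0$. Let $\la_0,\la_1\in\La$ be two points closest to $\xi$, say with $\la_0<\la_1$, chosen so that $|\la_i-\xi|\simeq\dist(\La,\xi)+$ (a bounded amount), using separation of $\La$; denote $d=\dist(\La,\xi)$. The key step is to build an auxiliary function $G\in\Ba$ with $G(\xi)=a(\xi)=0$, $\|G\|_\Ba\lesssim \|a\|_{\li}$, and such that the new data $a-G$ restricted to $\La$ still has controlled divided differences, with a norm bound depending only on $d$. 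A convenient choice is a rescaled, shifted sine integral bump: take
\[
G(z)=c\,\bigl(a(\la_1)-a(\la_0)\bigr)\int_\xi^z \frac{\sin(\zeta-\xi)}{\zeta-\xi}\,d\zeta,
\]
or, if two points are not enough to match the behavior of $a$ near $\xi$, a finite linear combination of such integrals of $\sinc$ centered near $\xi$; each summand lies in $\B\subset\Ba$ with $\Ba$-seminorm $\lesssim\|a\|_{\li}$ and decays like $1/|x-\xi|$ away from $\xi$.

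With $G$ in hand, set $b=a-G$ on $\La$. Then $b\in\li(\La)$ because both $a|_\La$ and $G|_\La$ are, and crucially $\|b\|_{\li(\La)}\lesssim \|a\|_{\li}$ with an implied constant depending only on $d$ (the $G$-contribution to any divided difference $\Delta_b(\la_k)$ is controlled by $\|G'\|_\infty\lesssim\|a\|_\li$). Since $\La$ is interpolating for $\Ba$, pick $H\in\Ba$ with $H|_\La=b$ and $\|H\|_\Ba\le K_0(\La)\|b\|_{\li(\La)}$. Now $F:=H+G\in\Ba$ satisfies $F|_\La=a|_\La$, and $\|F\|_\Ba\lesssim (K_0(\La)+1)\|a\|_\li$ with constant depending only on $d$. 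The remaining issue is that $F(\xi)$ need not equal $a(\xi)=0$: we have only arranged $G(\xi)=0$, so $F(\xi)=H(\xi)$, which is not controlled. To fix this we must choose $G$ more carefully so that it also corrects the value at $\xi$ \emph{after} $H$ is added — but $H$ depends on $G$, so a direct choice is circular.

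The clean way around the circularity — and this is the main obstacle — is to run the same Neumann-series / contraction scheme used in the proof that $\Gamma$ is interpolating (the "Claim" above establishing interpolation after perturbing the second point of each block). Namely, iterate: given residual data on $\La\cup\{\xi\}$ that vanishes at $\la_0$, extend it over $\La$ ignoring $\xi$ with norm control $K_0(\La)$, evaluate the resulting $\Ba$-function at $\xi$, subtract a $\sinc$-integral bump killing that value at $\xi$ while contributing a divided-difference error on $\La$ of size $\lesssim d^{-1}$ (hence $\lesssim d^{-1}$ times the current norm), and repeat. The error factor per step is $C/d$; but $d$ is fixed, not small, so one does not get a contraction directly. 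Instead, one localizes: because the bump $\int_\xi^z\sinc(\zeta-\xi)\,d\zeta$ has derivative decaying like $1/|x-\xi|$, only finitely many points of $\La$ (those within distance $\sim 1$ of $\xi$) feel an $O(1)$ perturbation, and the interpolation there can be handled by a finite-dimensional linear-algebra argument whose invertibility is uniform once $d$ is bounded below — equivalently, invoke the \emph{First stability result} / its corollary, which already handle adding finitely many nearby points with norm control depending only on the separation. Assembling: adjoin $\xi$ to $\La$, apply stability to conclude $\La\cup\{\xi\}$ is interpolating with $K_0(\La\cup\{\xi\})$ bounded in terms of $K_0(\La)$ and $\dist(\La,\xi)$ alone. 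I expect the finite-dimensional uniform-invertibility step (making precise that "bounded $d$ from below" suffices) to be the delicate point, since one must rule out the configuration degenerating as $\xi$ approaches $\La$ — which is exactly why the constant is allowed to depend on $\dist(\La,\xi)$.
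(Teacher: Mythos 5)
You have correctly located the crux --- after solving the interpolation problem on $\La$ alone, the value of the resulting function at $\xi$ is not under control --- but none of the devices you propose actually closes this gap. The iteration does not contract (as you yourself note, the per-step factor is $C/d$ with $d$ fixed, not small); the \emph{First stability result} you invoke concerns the \emph{sampling} constant $K(\La)$, not the interpolation constant, and its hypothesis (each added point has two $\eps$-neighbors in $\La$ for small $\eps$) is the opposite of your situation, where $\xi$ sits at distance $d>0$ from $\La$; and the ``finite-dimensional uniform invertibility'' step is not finite-dimensional at all --- it is the entire content of the claim. What is actually needed, and what the paper isolates, is a single function $h_\xi\in\Ba$ with $h_\xi|_\La=0$, $h_\xi(\xi)=1$, and $\|h_\xi\|_{\Ba}$ bounded in terms of $\dist(\La,\xi)$ alone; given such $h_\xi$, one interpolates $a|_\La$ by some $F_0$ with $\|F_0\|_\Ba\le K_0(\La)\|a\|$ and corrects by adding $(a(\xi)-F_0(\xi))\,h_\xi$, the coefficient being controlled through the Lipschitz bounds on $a$ and on $F_0$ at the point of $\La$ nearest to $\xi$. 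Your bumps $G$ do not vanish on $\La$, so they cannot play the role of $h_\xi$, and no superposition of sinc-integrals will visibly be made to do so.

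Producing $h_\xi$ is genuinely nontrivial and is where the paper spends its effort. Setting $\rho_\La(\xi)=\sup\{|F(\xi)|:\ F\in\Ba,\ F|_\La=0,\ \|F\|_\Ba\le1\}$, one first shows $\rho_\La(\xi)>0$ for every $\xi\notin\La$: if it vanished, the restriction map $\Ba\to\li(\La)$ would be injective as well as onto, hence invertible, making $\La$ a sampling sequence, so $D^-(\La)>1$; deleting three points would leave a sampling sequence, contradicting interpolation. The uniform lower bound $\rho_\La(\xi)\ge\kappa$ for $\dist(\xi,\La)\ge\alpha'$ is then obtained by contradiction through a compactness argument: translate so that the bad points $\xi_n$ sit at the origin, pass to a weak limit $\Gamma$ of the translated sequences, use that weak limits of interpolating sequences are interpolating with the same constant (Claim~\ref{beurling_limits}), construct a function vanishing on $\Gamma$ but not at $0$ (with extra zeros inserted to gain decay of the derivative), and transfer it back to the sequences $\La_n$ by correcting its small trace using the uniform interpolation constant. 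Your proposal contains no substitute for either of these steps, so as it stands the argument is incomplete.
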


\medskip

Let us take this claim for granted  for the moment being. Let $\alpha$ be the 
separation constant  for $\La$. For each  $\la\in\La$, choose the points $ 
\xi_\pm=\la\pm \alpha/8$, $\xi_1=\la+\alpha/4$.  The set $\La_\la:=\La\cup 
\{\xi_+,\xi_-, \xi_1\}$ is $\Ba$-interpolating and  also $K_0(\La_\la)<C$, $C$ 
being independent of the choice of $\la\in\La$. 

Define the sequence  $a_\la\in \ell^\infty_1(\La_\la)$ as 
\begin{equation}
\label{eq:ala}
a_\la(\mu)= \begin{cases} 1, & \mu = \la;\\  0 , & \mbox {otherwise}            
          \end{cases}
 \end{equation}
and take $g_\la\in \Ba$ such that $g_\la(\mu)= a(\mu), \ \mu \in \La_\la$, 
$\|g_\la\|_{\Ba}\lesssim 1$. It is straightforward that one can find numbers 
$c_\la$ such that $|c_\la| \sim 1$ and the  functions \[ f_\la(z) = c_\lambda 
\frac {g_\la(z)}{(z-\xi_-)(z-\xi_+)(z-\xi_1)} \] satisfy   \eqref{eq:delta} and 
\eqref{eq:estimate}. 

\medskip

In order to verify Claim \ref{extrapoint} it suffices to prove that, for each 
$\xi \in \R\setminus \La$, there is a function  $h_\xi \in \Ba$ such that 
\[
h_\xi|_\La=0, \ h_\xi(\xi)=1.
\]
 and $ \|h_\xi\|_\Ba $  can be estimated by a quantity which only depends on 
$\dist(\La, \xi)$. 

We   mimic the proof of the corresponding fact in \cite{Beurling89a}, Chapter V.
 
 \begin{claim}
 \label{beurling_limits}
If $\La$ is a separated  interpolating sequence for $\Ba$ then each $\Gamma\in 
W(\La)$ is also an interpolating sequence for $\Ba$, in addition 
$K_0(\Gamma)\leq K_0(\La)$.
 \end{claim}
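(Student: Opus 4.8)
The plan is to follow the classical normal-families argument from \cite{Beurling89a}, Chapter V, adapted to the divided-difference setting. Let $\Gamma\in W(\La)$, so that $\La+x_k\rhup\Gamma$ for some sequence $\{x_k\}\subset\R$. Since $\La$ is separated with separation constant $\alpha$, each translate $\La+x_k$ is separated with the same constant, and hence so is $\Gamma$ (weak limits preserve the lower bound on gaps, possibly with points of multiplicity two appearing, which are handled by replacing the corresponding divided difference with a derivative, exactly as in the remark after Theorem~\ref{thm:sampl} and in the sampling section). Fix $a\in\li(\Gamma)$ with $\|a\|_{\li(\Gamma)}\le 1$; we must produce $F\in\Ba$ with $F|_\Gamma=a$ and $\|F\|_\Ba\le K_0(\La)$.

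The key step is a truncation-and-transfer procedure. For each $N$, use the weak convergence to match, on $[-N,N]$, the points of $\Gamma$ with nearby points of $\La+x_k$ for $k$ large; transport the data $a$ (restricted to $\Gamma\cap[-N,N]$) to data $a_{k,N}$ on $(\La+x_k)\cap[-N,N]$, extended by zero outside, arranged so that $\|a_{k,N}\|_{\li(\La+x_k)}\le 1+o(1)$ as $k,N\to\infty$ — here one uses that a finite set of divided differences depends continuously on the node positions, and that $\Gamma$ is separated so nodes do not collide. Since $\La$ is interpolating with constant $K_0(\La)$, and interpolation constants are translation-invariant, there is $F_{k,N}\in\Ba$ interpolating $a_{k,N}$ on $\La+x_k$ with $\|F_{k,N}\|_\Ba\le K_0(\La)(1+o(1))$. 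Normalizing by subtracting an affine function so that, say, $F_{k,N}$ and $F_{k,N}'$ vanish at a fixed point, the family $\{F_{k,N}'\}$ is uniformly bounded in $\B$, hence a normal family of entire functions of exponential type $\pi$; extract a subsequence converging locally uniformly (first let $k\to\infty$ for fixed $N$, then a diagonal argument as $N\to\infty$) to some $G\in\B$ with $\|G\|_\B\le K_0(\La)$, and set $F(z)=F(0)+\int_0^z G$. Passing to the limit in the interpolation identities — rewritten as identities on integrals of $F_{k,N}'$ over the interpolation intervals, which is the form stable under the weak/Hausdorff convergence of the nodes — yields $F|_\Gamma=a$, and $\|F\|_\Ba=\|G\|_\B\le K_0(\La)$.

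The main obstacle is the bookkeeping in the transfer step: one must verify that the divided differences of the transported data $a_{k,N}$ really are controlled by $1+o(1)$ and not merely by a fixed constant, since the final bound $K_0(\Gamma)\le K_0(\La)$ (with no loss) is asserted. This is where separation of $\Gamma$ is essential — it keeps the denominators $\mu_{j+1}-\mu_j$ bounded below, so small Hausdorff perturbations of the nodes produce only small perturbations of each quotient $\bigl(a(\mu_{j+1})-a(\mu_j)\bigr)/(\mu_{j+1}-\mu_j)$ — together with the care needed at the two ends of the truncation window $[-N,N]$, where the extension-by-zero of the data meets the genuine data; choosing the cutoff points in gaps of $\Gamma$ of length $\gtrsim\alpha$ and letting them recede to infinity makes the end effects negligible. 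The appearance of double points in $\Gamma$ requires only the routine reinterpretation of $\li$ in terms of first derivatives at those points, and the normal-family limit automatically produces $F$ with the matching derivative condition there.
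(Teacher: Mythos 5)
Your overall strategy is exactly the one the paper intends: the paper's proof of this claim is a one-line reference to the normal-families argument of \cite[Chapter V, Lemma 5]{Beurling89a}, and your truncation--transfer--compactness scheme is that argument adapted to divided differences. However, one step as written would fail: you extend the transported data by \emph{zero} outside $[-N,N]$ and then assert $\|a_{k,N}\|_{\li(\La+x_k)}\le 1+o(1)$. This is false. After normalizing $a(\mu_0)=0$ at a reference node, the values of $a$ near the endpoints of the window are in general of size comparable to $N$ (the data is only Lipschitz, not bounded), so the divided difference across the junction between the last genuine value and the first zero value is of order $N/\alpha$, not $o(1)$. Your proposed remedy --- putting the cutoff in a gap of length $\gtrsim\alpha$ and letting it recede to infinity --- only controls the denominator, not the $O(N)$ numerator, so the uniform bound $\|F_{k,N}\|_{\Ba}\le K_0(\La)(1+o(1))$ on which the whole compactness argument rests is not obtained. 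The fix is standard and already appears in the paper (Claim~\ref{cl: extra point}): extend the truncated data outside the window in a Lipschitz way, e.g.\ by the constant value at the last interior node (so all exterior divided differences vanish) or by the McShane extension $\inf_{\mu}\bigl(a(\mu)+\mathrm{Lip}(a)\,|x-\mu|\bigr)$; either keeps the $\li$ norm at $1+o(1)$.

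Two smaller points. First, normalizing by subtracting an \emph{affine} function is both unnecessary and slightly dangerous: subtracting $c+dz$ with $d\neq 0$ shifts every divided difference of the data by $d$ and can inflate the $\li$ norm by $K_0(\La)$, spoiling the sharp constant. You only need to subtract a constant to pin down $F_{k,N}$ at one point; the derivatives $F_{k,N}'$ are already uniformly bounded by $K_0(\La)(1+o(1))$ and hence automatically form a normal family in $\B$. Second, since $\La$ is $\alpha$-separated, every weak limit of its translates is again $\alpha$-separated, so no points of multiplicity two can appear here and the derivative reinterpretation of $\li$ is not needed for this claim (it only arises for unions of two separated sequences).
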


The proof follows that in \cite[Chapter V, Lemma~5.]{Beurling89a}.

\medskip

Given $\xi\in \R\setminus \La$ denote 
\[
\rho_\La(\xi) = \sup \{|F(\xi)|, F\in \Ba,  \ F|_\La=0, \ \|F\|_\Ba \leq 1 \}.
\]

\begin{claim}
Let $\La$ be an $\alpha$-separated interpolating sequence. Then
for each $\alpha'\in (0, \alpha/2)$ there is   $\kappa>0$ such that 
\begin{equation}
\label{rhoestimate}
\rho_\La(\xi) >\kappa,    \text{ if }    \dist(\xi, \La)>\alpha'
\end{equation} 
\end{claim}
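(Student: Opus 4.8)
The plan is to follow Beurling's scheme for interpolating sequences from \cite[Chapter~V]{Beurling89a}: reduce the uniform lower bound to a compactness statement about weak limits, the key ingredient being that an interpolating sequence is not a set of uniqueness for $\Ba$.

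First I would argue by contradiction. If no such $\kappa$ exists, pick $\xi_n$ with $\dist(\xi_n,\La)>\alpha'$ and $\rho_\La(\xi_n)\to 0$, and translate: put $\La_n:=\La-\xi_n$, so that the origin plays the role of $\xi_n$. Each $\La_n$ is $\alpha$-separated and interpolating with $K_0(\La_n)=K_0(\La)$, $\dist(0,\La_n)>\alpha'$, and $\rho_{\La_n}(0)=\rho_\La(\xi_n)\to 0$. By a standard diagonal argument the $\alpha$-separated sequences $\La_n$ have a weakly convergent subsequence, $\La_n\rhup\Gamma$, with $\Gamma$ again $\alpha$-separated and $\dist(0,\Gamma)\ge\alpha'>0$; since $\Gamma\in W(\La)$, Claim~\ref{beurling_limits} gives that $\Gamma$ is interpolating with $K_0(\Gamma)\le K_0(\La)$.

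The heart of the argument is to show $\rho_\Gamma(0)>0$, with a lower bound depending only on $\alpha$ and $\alpha'$; equivalently, that there is a non-constant $H\in\Ba$ with $H|_\Gamma=0$ of controlled $\Ba$-norm and with $|H(0)|$ bounded below. Because $\Gamma$ is interpolating and separated, for every $\gamma\in\Gamma$ the datum equal to $1$ at $\gamma$ and $0$ elsewhere lies in $\li(\Gamma)$, so there is $F_\gamma\in\Ba$ with $F_\gamma(\gamma')=\delta_{\gamma\gamma'}$ for $\gamma'\in\Gamma$ and $\|F_\gamma\|_\Ba\lesssim 1/\alpha$. Taking differences of such functions whose peaks recede to $+\infty$ and to $-\infty$ and passing to a normal-families limit produces an $H\in\Ba$, $\|H\|_\Ba\lesssim 1/\alpha$, vanishing on all of $\Gamma$; a further argument — pinning the limit down so that it does not degenerate to a constant, and estimating $|H(0)|$ from below using that $H$ oscillates between $0$ and $\pm1$ over intervals of length comparable to the gaps of $\Gamma$ near the origin — is needed to conclude. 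This non-uniqueness step is where Beurling's ideas enter and is, I expect, the main obstacle.

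Finally I would transfer the estimate back to the $\La_n$. Given $F$ almost extremal for $\rho_\Gamma(0)$ (with $\|F\|_\Ba\le1$, $F|_\Gamma=0$, $F(0)$ close to $\rho_\Gamma(0)$), the Hausdorff convergence of $\La_n\cap[-N,N]$ to $\Gamma\cap[-N,N]$ forces $F$ to be uniformly small on $\La_n\cap[-N,N]$; correcting $F$ on this window by an $\Ba$-function of small $\Ba$-norm and small value at $0$ — produced by the interpolation operator of $\La_n$ and localized away from infinity — yields a competitor for $\rho_{\La_n}(0)$ of height bounded below by a positive constant, contradicting $\rho_{\La_n}(0)\to 0$. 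The case in which $\Gamma$ has an unbounded gap around the origin (so that $\Gamma$ may even be empty near $0$, or empty altogether) must be treated separately, but there one checks directly that $\rho_{\La_n}(0)$ cannot tend to zero. Besides the non-uniqueness step, I expect the localization of the correction near infinity in this last step to require some care.
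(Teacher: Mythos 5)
Your skeleton (argue by contradiction, translate to $\La_n=\La-\xi_n$, pass to a weak limit $\Gamma$, invoke Claim~\ref{beurling_limits}, transfer back) is exactly the paper's. But the two steps you yourself flag as obstacles are genuine gaps, and the paper closes them by devices absent from your sketch. First, the positivity $\rho_\Gamma(0)>0$ is not obtained by a normal-families limit of receding delta-functions: that limit can perfectly well degenerate to $0$ or to a constant, and nothing in your construction prevents it. The paper instead proves, for \emph{any} interpolating sequence $\Sigma$ and any $\xi\notin\Sigma$, that $\rho_\Sigma(\xi)>0$ by soft functional analysis: if $\rho_\Sigma(\xi)=0$ then the restriction map $T\colon\Ba\to\li(\Sigma)$ is injective as well as surjective, hence invertible, so $\Sigma$ is also sampling; then $D^-(\Sigma)>1$, so $\Sigma$ with three points removed is still sampling, which is incompatible with $\Sigma$ being interpolating. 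No quantitative lower bound for $\rho_\Gamma(0)$ in terms of $\alpha,\alpha'$ is needed --- the uniformity comes entirely from the compactness scheme --- so the ``main obstacle'' you anticipate is not where the work lies.

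Second, the transfer back to $\La_n$. An extremal $F$ for $\rho_\Gamma(0)$ is small on $\La_n\cap[-N,N]$, but there is no reason for $\|F|_{\La_n}\|_{\li(\La_n)}$ to be small globally: $F$ has no decay at infinity, and the ``correction localized away from infinity'' you invoke does not exist in $\Ba$ (one cannot cut off a bandlimited function). The paper's fix is to enlarge the limit sequence to $\Gamma'=\Gamma\cup\{t_1,t_2\}$ (still interpolating, by the first step), take $F$ extremal for $\rho_{\Gamma'}(0)$, and pass to $G(z)=F(z)/((z-t_1)(z-t_2))$, which still vanishes on $\Gamma$, has $G(0)=\gamma t_1^{-1}t_2^{-1}\neq0$, and now satisfies $G'(x)\to0$ as $x\to\infty$. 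This decay is precisely what makes $\|G|_{\La_n}\|_{\li(\La_n)}\to0$ on \emph{all} of $\La_n$, so the correction $H_n$ supplied by the interpolation operator of $\La_n$ satisfies $\|H_n\|_{\Ba}\to0$ and $H_n(0)\to0$, and $\Phi_n=G-H_n$ contradicts $\rho_{\La_n}(0)\to0$. Without the two auxiliary points and the division, your final step does not close.
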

\begin{proof} 
First we mention that    $\rho_{\La}(\xi)>0$, $\xi\notin \La$. Indeed otherwise
$F\in \Ba$, $F|_\La=0$ yields $F=0$ in other words the mapping $T:F\mapsto
F|_\La$, $T:\Ba \to \ell^\infty_1(\La)$ has zero kernel. Since $\La$ is an
interpolating sequence $T$ acts onto and hence is invertible. This means that
$\La$ is also a sampling set and, by Theorem 3, $D^-(\La)>1$. Take any three
points $\la_1, \la_2, \la_3 \in \La$ and denoting $\La'=\La\setminus
\{\la_i\}_{i=1,2,3}$ we have $D^-(\La')>1$, hence $\La'$ is a sampling set as
well, this contradicts the fact that $\La$ is an interpolating sequence.

It follows now that, if $\La$ is an interpolating sequence and $\xi \not \in 
\La$, then $\La\cup \{\xi\}$ is also an interpolating sequence.
 
Assume that  there
is a sequence of points $\xi_n$, $\dist(\xi_n, \La)>\alpha'$ and
$\rho_{\La}(\xi_n) \to 0$ as $n\to \infty$. Let $\La_n= \La-\xi_n$.  Each
$\La_n$ is an $\alpha$-separated sequence and also  $K_0(\La_n)=K_0(\La)$,
$\rho_{\La_n}(0)\to 0$. We may assume that $\La_n\rhup \Gamma$, then
$\dist(0,\Gamma) \geq\alpha'$.  Fix two points $t_1,t_2 \not\in \Gamma\cup \{0\}$.
The set $\Gamma'=\Gamma\cup \{t_1,t_2\}$ is also an interpolating sequence, 
hence $\gamma:=\rho_{\Gamma'}(0)>0$.

Therefore there exists $F\in \Ba$ such that  $\|F\|_\Ba=1$,  $F|_{\Gamma'}=0$,
and $F(0)=\gamma$. Then 
\[
G(z):=\frac{F(z)}{(z-t_1)(z-t_2)} \in \Ba, \  G|_{\Gamma}=0, \ G(0)=\gamma
t_1^{-1}t_2^{-1}, 
\]
in addition  $G'(x) \to 0$, as  $x\to \infty$.

We have   now   $\|G|_{\La_n} \| _{l^\infty_1(\La_n)} \to 0$ as $n\to
\infty$: for large values of the argument this follows from the decay of $G'$
(we remind that all $\La_n$  are $\alpha$-separated) for limited values of the
argument this follows from the fact that $G|_{\Gamma}=0$ and $\La_n\rhup
\Gamma$. Since $K_0(\La_n)=K_0(\La)$ we can find a function $H_n\in \Ba$ such
that
$H_n|_{\La_n}= G|_{\La_n}$ and    also $\|H_n\|_{\Ba}\to 0$ as $n\to \infty$. In
addition  $H_n(0)\to 0$, $n\to \infty$.  Now the functions 
\[
\Phi_n(z)=G(z)-H_n(z)
\]
satisfy $\Phi_n|_{\La_n}=0$, $\|\Phi_n\|_{\Ba}<C$ and also $\Phi_n(0)\to \gamma
t_1^{-1}t_2^{-1}$. The latter is incompatible with $\rho_{\La_n}(0)\to 0$.  
\end{proof}
 
\section{Interpolation theorem. Sufficiency.}\label{sect:interpolating2}
Let  $\La=\La_1\cup \La_2\subset \R $  satisfy the hypothesis of
Theorem~\ref{thm:inter}, i.e. the subsequences $\La_1,\La_2$ are separated,
$\La_1\cap\La_2= \emptyset$,  and $D^+(\La)<1$.  We are going to prove that
$\La$ is interpolating for $\Ba$.

\begin{claim}
\label{cl: extra point}
Let a sequence $\Sigma\subset \R$, $\Sigma\cap \La= \emptyset$  be such that
 $\Gamma=\La\cup \Sigma$ is interpolating for $\Ba$. Then $\La$ is also
interpolating for  $\Ba$ with the same constant of interpolation. 
\end{claim}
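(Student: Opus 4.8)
The plan is to reduce the statement to interpolation on $\Gamma$ by extending the prescribed data from $\La$ to $\Gamma$ without increasing its norm. Concretely, given $a\in\li(\La)$ I would produce $\tilde a\in\li(\Gamma)$ with $\tilde a|_\La=a$ and $\|\tilde a\|_{\li(\Gamma)}\le\|a\|_{\li(\La)}$; since $\Gamma$ is interpolating for $\Ba$, one may then pick $F\in\Ba$ with $F|_\Gamma=\tilde a$ and $\|F\|_\Ba\le K_0(\Gamma)\|\tilde a\|_{\li(\Gamma)}$. As $\La\subset\Gamma$ this $F$ satisfies $F|_\La=a$ and $\|F\|_\Ba\le K_0(\Gamma)\|a\|_{\li(\La)}$, which shows that $\La$ is interpolating for $\Ba$ with interpolation constant $K_0(\La)\le K_0(\Gamma)$.

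The only point that genuinely requires an argument is the norm-preserving extension, and here I would use the description of $\|\cdot\|_{\li}$ as a Lipschitz seminorm recalled right after the definition of $\li(\Lambda)$: for a strictly increasing sequence one has $\|a\|_{\li(\Lambda)}=\sup_{\la\ne\la'}|a(\la)-a(\la')|/|\la-\la'|$, so $a\in\li(\La)$ is precisely a Lipschitz function on the set $\La$ with constant $L:=\|a\|_{\li(\La)}$. It then suffices to extend $a$ to a Lipschitz function on all of $\R$ with the same constant and restrict to $\Gamma$, since restricting a Lipschitz function to a subset does not increase its Lipschitz constant, hence does not increase the $\li$ norm. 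Enumerating $\La=\{\la_k\}$, I would take $\tilde a$ to be the piecewise linear interpolant of $a$ through the nodes $\la_k$, continued by the constant value $a(\la_k)$ beyond any extreme node in case $\La$ is bounded above or below. On each interval $(\la_k,\la_{k+1})$ the derivative of $\tilde a$ equals the divided difference $\Delta_a(\la_k)$, of modulus $\le L$, and the same bound holds a.e.\ on the complementary rays; hence $\tilde a$ is absolutely continuous with $|\tilde a{\,}'|\le L$ a.e., so $|\tilde a(x)-\tilde a(y)|\le L|x-y|$ for all $x,y\in\R$, and $\tilde a|_\Gamma\in\li(\Gamma)$ with norm at most $L$. (Alternatively one may invoke Kirszbraun's extension theorem with domain $\R$ and target $\C\cong\R^2$; note that splitting into real and imaginary parts and extending each separately would only yield the weaker bound $\sqrt2\,L$, which is why the explicit piecewise linear extension is preferable when one wants to keep the constant.)

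I expect this extension step to be the main — and essentially the only — subtlety: it must be carried out so as to lose nothing in the constant and to account for points of $\Sigma$ lying outside the convex hull of $\La$, both of which the piecewise linear construction handles directly. Everything else is an immediate consequence of the definition of an interpolating sequence together with the inclusion $\La\subset\Gamma$.
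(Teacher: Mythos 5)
Your proof is correct and follows the same strategy as the paper: extend the data from $\La$ to $\Gamma$ without increasing the Lipschitz constant, then interpolate on $\Gamma$. The only difference is cosmetic — the paper uses the McShane infimal-convolution formula $\widetilde a(x)=\inf_{\lambda\in\La}\bigl(a(\lambda)+\mathop{Lip}(a)\,|x-\lambda|\bigr)$ where you use the piecewise linear interpolant (which, as you note, has the mild advantage of handling complex-valued data directly).
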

\begin{proof}  
 Given any data $a\in
\li(\La)$ we may extend it on $\Lambda\cup \Sigma$ with the same Lipschitz
constant as
\[
\widetilde a(x)=\inf_{\lambda\in \La} (a(\lambda)+\mathop{Lip}(a) 
|x-\lambda|),\ \forall x\in \Lambda\cup \Sigma
\]
where $\mathop{Lip}(a)$ is the Lipschitz constant of $a$.

We   have then $a\in \li(\Gamma)$ and  $\|a\|_{\li(\Gamma)}= 
\|a\|_{\li(\La)}$. Any
solution of the corresponding problem on $\Gamma$ gives now a solution on $\La$.
\end{proof}

\begin{cor}
Without loss of generality we may assume $D^-(\La)>0$.
\end{cor}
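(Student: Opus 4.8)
The plan is to reduce the sufficiency direction of Theorem~\ref{thm:inter} to the case $D^-(\La)>0$ by enlarging $\La$. Concretely, I would produce a sequence $\Sigma\subset\R$ disjoint from $\La$ such that $\Gamma=\La\cup\Sigma$ still satisfies the hypotheses of Theorem~\ref{thm:inter} — it is a union of two separated sequences with $D^+(\Gamma)<1$ — and, in addition, $D^-(\Gamma)>0$. Granting that the sufficiency part has been established under the extra assumption of positive lower density, $\Gamma$ is then interpolating for $\Ba$, and Claim~\ref{cl: extra point} transfers the interpolation property from $\Gamma$ back to $\La$. Hence it is enough to treat sequences with $D^-(\La)>0$.

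For the construction I would fix $\alpha'>\bigl(1-D^+(\La)\bigr)^{-1}$, let $\alpha>0$ be a common separation constant for $\La_1$ and $\La_2$, and set
\[
\Sigma=\bigl\{\alpha' n:\ n\in\Z,\ \dist(\alpha' n,\La)>\alpha'/10\bigr\},\qquad \Gamma=\La\cup\Sigma.
\]
Then $\Sigma\cap\La=\emptyset$ by definition, and since $\Sigma\subset\alpha'\Z$ one has $D^+(\Sigma)\le1/\alpha'$, whence $D^+(\Gamma)\le D^+(\La)+1/\alpha'<1$, so the upper-density requirement survives. For the lower density I would check that $\Gamma$ has no gap longer than $\frac{6}{5}\alpha'$: in a longer gap $(\gamma,\gamma')$ the subinterval $(\gamma+\alpha'/10,\gamma'-\alpha'/10)$ has length exceeding $\alpha'$ and so contains some $\alpha' n$; this $\alpha' n$ is either a point of $\Sigma\subset\Gamma$ lying strictly inside $(\gamma,\gamma')$, or it is within $\alpha'/10$ of a point of $\La\subset\Gamma$, which again lands strictly inside $(\gamma,\gamma')$ — either way contradicting that $\gamma,\gamma'$ are consecutive in $\Gamma$. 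Thus $\Gamma$ is relatively dense and $D^-(\Gamma)>0$.

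The step that needs the most care is verifying that $\Gamma$ is still a union of two separated sequences. Enumerating $\Gamma=\{\mu_k\}$ increasingly, this is equivalent to a uniform bound $\mu_{k+2}-\mu_k\ge c$ for some $c>0$, for then $\Gamma=\{\mu_{2j}\}\cup\{\mu_{2j+1}\}$ is a union of two $c$-separated sequences. Each $\mu_k$ belongs to exactly one of the three pairwise disjoint sets $\La_1,\La_2,\Sigma$. Among three consecutive points $\mu_k,\mu_{k+1},\mu_{k+2}$, if two come from the same one of these sets they are at distance at least $\min(\alpha,\alpha')$ and we are done; the remaining (delicate) possibility is that they come one from each of $\La_1,\La_2,\Sigma$, where the points from $\La_1$ and $\La_2$ may form an arbitrarily tight pair. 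Here the defining property of $\Sigma$ saves the argument: the point from $\Sigma$ is at distance greater than $\alpha'/10$ from every point of $\La$, so checking the three possible orderings of the triple gives $\mu_{k+2}-\mu_k>\alpha'/10$ in each. Hence $c=\min(\alpha,\alpha'/10)$ works, all hypotheses of Theorem~\ref{thm:inter} (plus $D^->0$) hold for $\Gamma$, and the reduction described in the first paragraph applies.
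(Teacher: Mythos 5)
Your proposal is correct and follows essentially the same route as the paper: augment $\La$ by a sparse relatively dense sequence $\Sigma$ disjoint from $\La$ so that $\Gamma=\La\cup\Sigma$ keeps $D^+(\Gamma)<1$ and gains $D^-(\Gamma)>0$, then transfer interpolation back to $\La$ via Claim~\ref{cl: extra point}. You merely supply the explicit construction and the verification (in particular that $\Gamma$ remains a union of two separated sequences) that the paper leaves implicit, and these details check out.
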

Indeed, were this not the case one can add a relatively dense sequence of points
$\Sigma$ such that the union $\Gamma=\Lambda\cup \Sigma$ still has the property
$D^-(\Gamma)<1$ and we apply the previous Claim~\ref{cl: extra point}.

We start with proving the sufficiency assuming in addition that $\La$ is a 
separated sequence.  The corresponding machinery is related to the notion of 
sine-type functions.
\begin{definition}
 An entire function $S$ is a sine-type function if it is of exponential type, 
its zeros are simple and separated,  and there is a constant $C$ such that 
\begin{equation*}
 |S(z)|\simeq e^{\pi |\Im z|}, \qquad \forall z,\ |\Im z|>C.
\end{equation*}
\end{definition}
This definition was introduced by Levin, see e.g. \cite{Levin96} who proved that 
the zero set of a sine-type function, which lies in a strip around the real 
axis, is both an interpolating   and a sampling sequence for the Paley-Wiener 
space.

\begin{lemma}  
\label{lemma1}
Let $\La=\{\la_k\}\subset \R$ be separated, $D^+(\La)<1$ and $D^-(\La)>0$. Then
$\La$ is interpolating for $\Ba$. 
\end{lemma}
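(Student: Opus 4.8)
Since $\La$ is separated and $D^-(\La)>0$, the gaps $|I_k|:=\la_{k+1}-\la_k$ are bounded above and below, so $|I_k|\simeq1$; write $c_k$ for the midpoint of $I_k$. Using that $\La$ is separated with $D^+(\La)<1$, I would first enlarge it to a separated, relatively dense set $\Gamma\supseteq\La$ of density one which is the zero set of a sine-type function $S$ of exponential type $\pi$, arranging moreover that $\Gamma\setminus\La$ is relatively dense — so that for each $k$ there is a point $\mu_k\in\Gamma\setminus\La$ with $|\mu_k-c_k|\lesssim1$. From now on all constants may depend on $\La,\Gamma,S$; recall that $|S|\lesssim1$ on $\R$, $|S'(\gamma)|\simeq1$ for $\gamma\in\Gamma$, and $|S(z)|\simeq e^{\pi|\Im z|}$ for $|\Im z|\ge1$ (hence $|S(z)|\lesssim e^{\pi|\Im z|}$ everywhere).

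\textbf{Reduction to building blocks.} The main point is to construct, uniformly in $k$, a function $\Psi_k\in\Ba$ that interpolates on $\La$ the unit step at $I_k$ normalized by $\Psi_k(\la_0)=0$ — that is $\Psi_k(\la_j)=\mathbf 1_{\{j>k\}}$ for $k\ge0$ and $\Psi_k(\la_j)=-\mathbf 1_{\{j\le k\}}$ for $k<0$ — with $\|\Psi_k\|_{\Ba}\lesssim1$ and $|\Psi_k'(x)|\lesssim(1+|x-c_k|^2)^{-1}$. Granted this, for $a\in\li(\La)$ put $b_k:=a(\la_{k+1})-a(\la_k)$ (so $|b_k|\lesssim\|a\|_{\li}$ since $|I_k|\simeq1$) and $F:=a(\la_0)+\sum_k b_k\Psi_k$. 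Since $\{c_k\}$ is separated, $\sum_k|\Psi_k'(x)|\lesssim1$, so $F'=\sum_k b_k\Psi_k'$ converges uniformly on $\R$ to a function in $\B$ with norm $\lesssim\|a\|_{\li}$; since $|\Psi_k(x)|\le\int_{[\la_0,x]}|\Psi_k'|=O((1+|c_k|^2)^{-1})$ for $|k|$ large, the series for $F$ converges locally uniformly and $F\in\Ba$ with $\|F\|_{\Ba}\lesssim\|a\|_{\li}$; finally, telescoping $\sum_k b_k\Psi_k(\la_j)$ (a finite sum for each $j$) gives $F(\la_j)=a(\la_j)$. Hence $\La$ is interpolating for $\Ba$.

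\textbf{Construction of $\Psi_k$ via $\bar\partial$.} Take a smooth step $\beta_k\colon\R\to[0,1]$ equal to $0$, resp.\ $1$, to the left, resp.\ right, of the middle third of $I_k$, so that $\beta_k(\la_j)=\mathbf 1_{\{j>k\}}$ and $\beta_k'$ is supported in $I_k$ with $\|\beta_k'\|_\infty+\|\beta_k''\|_\infty\lesssim1$; extend by $b_k(z):=\beta_k(\Re z)$ and then, near each of the $O(1)$ points $\gamma\in\Gamma$ lying under the support of $\beta_k'$, replace $b_k$ by the constant value $\beta_k(\gamma)$ using disjoint smooth cutoffs of a fixed radius $\rho<\alpha/2$. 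This yields $\tilde b_k$ with $\tilde b_k(\la_j)=\mathbf 1_{\{j>k\}}$, $\bar\partial\tilde b_k$ bounded and vanishing near $\Gamma$, and $\tilde b_k'$ supported in a fixed-length interval around $c_k$. Then $u_k:=\bar\partial\tilde b_k/S$ is smooth, supported in the vertical strip $\{|\Re z-c_k|\lesssim1\}$, with $|u_k(z)|\lesssim e^{-\pi|\Im z|}$ uniformly in $k$; let $w_k(z):=\tfrac1\pi\int u_k(\zeta)(z-\zeta)^{-1}\,dA(\zeta)$, so $\bar\partial w_k=u_k$. One checks that $w_k$ is bounded on $\C$ and that, with $m_k:=\int u_k\,dA$ (so $|m_k|\lesssim1$), $w_k(z)=\tfrac{m_k}{\pi(z-c_k)}+O(|z-c_k|^{-2})$ and $w_k'(z)=O(|z-c_k|^{-2})$ as $|\Re z-c_k|\to\infty$. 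Set
\[
\Psi_k:=\tilde b_k-Sw_k+\frac{m_k}{\pi}\,\frac{S(z)}{z-\mu_k}.
\]
The last term is entire (the pole at $\mu_k$ is cancelled by $S(\mu_k)=0$), vanishes on $\La\subseteq Z(S)$, and cancels the slowly decaying tail of $Sw_k$; thus $\Psi_k$ is entire ($\bar\partial\Psi_k=\bar\partial\tilde b_k-Su_k=0$), satisfies $\Psi_k(\la_j)=\tilde b_k(\la_j)=\mathbf 1_{\{j>k\}}$, obeys $|\Psi_k(z)|\lesssim e^{\pi|\Im z|}$ (so it is of exponential type $\le\pi$), and on $\R$ one has $\Psi_k'=\tilde b_k'-\partial_x\bigl(Sw_k-\tfrac{m_k}{\pi}S/(z-\mu_k)\bigr)$, where the first term is supported near $c_k$ and, because $|\mu_k-c_k|\lesssim1$, the second is $\lesssim(1+|x-c_k|^2)^{-1}$. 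Hence $\Psi_k'\in\B$ with $\|\Psi_k'\|_\infty\lesssim1$ and the required decay.

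\textbf{Main obstacle.} The delicate step is the third one: carrying out all the $\bar\partial$-estimates — boundedness of $w_k$ on $\C$, the $|z-c_k|^{-2}$ rate for $w_k-\tfrac{m_k}{\pi(z-c_k)}$ and for $w_k'$, and the exponential-type bound for $\Psi_k$ — \emph{uniformly in $k$}, together with the realization that the unavoidable $O(|x|^{-1})$ tail of $Sw_k$ on $\R$ (precisely the reason a naive interpolation series diverges here) must be removed by the explicit analytic correction $\tfrac{m_k}{\pi}S/(z-\mu_k)$, since a boundary-term computation shows $u_k$ cannot be arranged to have zero mean by modifications supported near $I_k$. A second, more classical, point is the completion of $\La$ to a sine-type zero set with $\Gamma\setminus\La$ relatively dense, which is where $D^+(\La)<1$ is used. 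This combination of Beurling-type completions with $\bar\partial$-surgery is in the spirit of \cite{BoeNic04} and \cite{LM05}.
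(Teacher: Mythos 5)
Your argument is correct in its overall structure and reaches the same reduction as the paper --- telescoping the data into increments $b_k$ and summing them against entire ``unit step'' blocks whose divided differences on $\La$ are $\delta_{kj}$ --- but the blocks are built by a genuinely different mechanism. The paper constructs, for \emph{each} $k$, a sine-type function $S_{\la_k}$ vanishing on $\La\cup\{\la_k+i\}$ (a uniform family obtained by rerunning the completion argument of \cite{OrtSei98}), and realizes the step as a Cauchy integral of $(\zeta-(\la_k+i))/S_{\la_k}(\zeta)$ along a vertical line $\gamma_k$; the extra zero at $\la_k+i$ is what produces the decay $|\Phi_k'(x)|\lesssim |x-\la_k|^{-3/2}$, and since the Cauchy estimates degenerate on the lines $\gamma_k$ the paper must invoke the Logvinenko--Sereda/Katsnelson equivalent-norm theorem to pass from a relatively dense subset of $\R$ to all of $\R$. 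You instead fix a \emph{single} sine-type $S$ with zero set $\Gamma\supseteq\La$ such that $\Gamma\setminus\La$ is relatively dense (this is where $D^+(\La)<1$ enters for you, since $D^-(\Gamma\setminus\La)\ge 1-D^+(\La)>0$), smooth the jump, solve $\dbar$ explicitly, and remove the non-summable $O(|x-c_k|^{-1})$ tail of $Sw_k$ by subtracting its principal part $\tfrac{m_k}{\pi}S(z)/(z-\mu_k)$ anchored at a spare real zero $\mu_k$. This buys quadratic decay of $\Psi_k'$, pointwise summability on $\R$ with no appeal to Logvinenko--Sereda, and only one auxiliary sine-type function instead of a uniform family; the price is the $\dbar$-surgery near the zeros of $S$ under $\supp\beta_k'$, which you handle correctly.

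Two small points to patch, neither fatal. First, your stated estimates give $w_k'(z)=O(|z-c_k|^{-2})$ only away from $\supp u_k$; for $|x-c_k|\lesssim 1$ you cannot bound $\partial_x(Sw_k)$ termwise, since the Cauchy transform of a merely bounded density need not have a bounded derivative on its support. The fix is immediate: you have already shown $\|\Psi_k\|_{\B}\lesssim 1$ uniformly (each of the three terms is uniformly bounded on $\R$), so Bernstein's inequality gives $|\Psi_k'(x)|\lesssim 1$ there, and the decay estimate is only needed for $|x-c_k|\gtrsim 1$. Second, your normalizations of $\Psi_k$ in the reduction step and in the construction differ by the constant $\Psi_k(\la_0)$ (harmless --- subtract it), and the convergence of $\sum_k b_k\Psi_k'$ is uniformly bounded and locally uniform rather than uniform on $\R$; to sum $\Psi_k$ itself on compact subsets of $\C$ (not just of $\R$) one also needs the decay of $\Psi_k'$ in a horizontal strip, which follows from the decay on $\R$ by standard estimates for functions of exponential type.
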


\begin{proof}
Since the upper density of the sequence $\Lambda$ is strictly smaller than one, 
it is possible to find a sequence $\Sigma \subset \R$ and a sine-type function 
$S$  with zero set $ \Lambda \cup \Sigma$. This is done in 
\cite[Lemma~3]{OrtSei98}.

Moreover for for each $\lambda\in \R$   one can consider the sequences 
$\Lambda_\lambda=\Lambda \cup \{\lambda +i\}$ and, with the same proof as in 
\cite{OrtSei98}, one can find sequences $\Sigma(\lambda)$ and functions 
$S_\lambda$ with zero sets $\Lambda \cup \{\lambda  +i\}\cup \Sigma(\lambda)$ 
and such that
\begin{equation}
 \label{sine-type}
 |S_\lambda(z)|\simeq e^{\pi |\Im z|}, \qquad \forall z,\ |\Im z|>C,
\end{equation}
with the constants implicit in \eqref{sine-type} being  uniform for all 
$\lambda\in \Lambda$. We split the remaining construction into several steps

\medskip

\noindent {\it Elementary solutions:}  
The hypothesis imply that there is an $\alpha>0$  such that
$\la_{k+1}-\la_k>\alpha>0$.

Denote $\gamma_k= \la_k+\alpha/10+i\R$
and let 
\[
\chi_k(z)=\begin{cases}
  0, & \text{if $\Re z < \la_k+\alpha/10$;} \\
  1, & \text{ if $\Re z >  \la_k+\alpha/10$.}
                \end{cases}
\]

Consider the functions
\begin{equation}
\label{blocks}
\Phi_k(z)=\frac{1}{2i\pi} \ \frac{S_{\la_k}(z)}{z-(\la_k+i)} \int_{\gamma_k}
           \frac{\zeta-(\la_k+i)}{S_{\la_k}(\zeta)}\frac{d\zeta}{\zeta-z} +
\chi_k(z)+d_k,
\end{equation}
where the constants $d_k$ are chosen so that $\Phi_k(0)=0$. Convergence of the 
integral in the right-hand side follows from the estimate \eqref{sine-type}, 
$\Phi_k$ are well-defined and  analytic functions outside $\gamma_k$. 
Furthermore, $\Phi_k$ can be extended as an entire function of exponential type 
$\pi$: it follows from the Sokhotskii-Plemelj formula that $\Phi_k$ are 
continuous on $\gamma_k$, so the singularities along $\gamma_k$ can be removed. 
The growth estimates are straightforward.  We also have 
\begin{equation}
\label{delta}
\Phi(\la_j)=0, \ j\leq k, \quad \Phi(\la_j)=1, \ j> k, 
\end{equation}
and respectively
\begin{equation}
\label{delta1} 
\Phi_k(\la_{j+1})-\Phi_k(\la_j)=\delta_{k,j}.
\end{equation}

\medskip

\noindent{ \it Formal solution to the interpolation problem:}  

Given a sequence $a=\{a(\la_k)\}\in \li(\La)$ denote
\[
\Delta_a(\la_k)=\frac{a(\la_{k+1})-a(\la_k)}{\la_{k+1}-\la_k} ,  
\]
then the function 
\begin{equation}
\label{solution}
F(z)=\sum_k  (\la_{k+1}-\la_k)\Delta_a(\la_k) \Phi_k(z).
\end{equation}  
yields a solution to  the interpolation problem  \eqref{interpolation} provided 
that the series  in the right-hand side is convergent to a function in $\Ba$.

\medskip

\noindent{ \it Solution to the interpolation problem. Convergence:} 
\begin{claim}
The series \eqref{solution} converges uniformly  on compact sets in $\C$, to an
entire function $F\in\Ba$.  This function provides a solution to the
interpolation problem
\eqref{interpolation}.
\end{claim}
\begin{proof}
It suffices to prove the convergence of the sum $\sum \Phi_k'(z)$ on each
compact
set in $\C$, to a function in $\B$. The convergence of \eqref{solution}  will
then
follow due to the normalization $\Phi_k(0)=0$. 

We remind that a set  $E\subset \R$ is called \emph{relatively dense} if, for 
some $L>0$, 
\[
\inf_{x\in \R} \text{mes} (E\cup(x,x+L)) 
\]
For example the set
\beq
\label{sete}
E=\left \{x\in \R; |x-(\lambda_k+ \frac \alpha {10} )|< \frac \alpha {20} 
\right \}
\eeq
is relatively dense, since $D^-(\La)>0$. Observe also that $\dist(E, \cup 
\gamma_k )>0$, here $\dist$ stands for the usual Euclidean distance.  

We will use the following fact (see e.g. \cite{Katsnelson73, LS74})

{\em Given a relatively dense set} $E\subset \R$ {\em there exist a constant 
$C$ such that}
\[
\sup_\R |F(x)|  \leq C \sup_E |F(x)| 
\]
{\em for each entire function $F$ of exponential type} $\pi$.

That is why it suffices to prove the  uniform convergence of the series  $\sum \Phi_k'(z)$ 
only on the set $E$ defined by \eqref{sete}.

Let $b(\la_k)=  (\la_{k+1}-\la_k)\Delta_a(\la_k)$. Since $\{\Delta_a\}\in 
l^\infty(\La)$ and also $D^-(\La)>0$ we have 
$b\in  \ell^\infty(\La)$ and 

\[
\begin{aligned}
\sum_k \Phi_k'(x)&=\sum_k\frac{b(\la_k)}{2i\pi} \ \frac
{S_{\la_k}'(x)}{x-(\la_k+i)}\int_{\gamma_k}
           \frac{\zeta-(\la_k+i)}{S_{\la_k}(\zeta)}\frac{d\zeta}{\zeta-x}- \\
&\sum_k\frac{b(\la_k)}{2i\pi} \ \frac{S_{\la_k}(x)}{(x-(\la_k+i))^2}
\int_{\gamma_k}
           \frac{\zeta-(\la_k+i)}{S_{\la_k}(\zeta)}\frac{d\zeta}{\zeta-x}+ \\ 
&\sum_k \frac{b(\la_k)}{2i\pi} \ \frac{S_{\la_k}(x)}{x-(\la_k+i)}
\int_{\gamma_k}
           \frac{\zeta-(\la_k+i)}{S_{\la_k}(\zeta)}\frac{d\zeta}{(\zeta-x)^2}
=\\
&=\Sigma_1(x) +\Sigma_2(x)+\Sigma_3(x).
\end{aligned}
\]
We consider just the first sum, the rest  can be treated similarly. It follows
from the construction of the sine-type functions $S_{\la_k}$ that for some $C>0$
and all  $k$
\[
|S_{\la_k}(x)|<C, \ x\in \R, \qquad
\left|\frac{\zeta-(\la_k+i)}{S_{\la_k}(\zeta)}\right|<Ce^{-\pi |\Im \zeta|/2}, \
\zeta \in 
 \gamma_k,
\]
so the Cauchy inequality   gives
 \[
 \begin{split}
\left |  \frac {S_{\la_k}'(x)}{x-(\la_k+i)}\int_{\gamma_k}
           \frac{\zeta-(\la_k+i)}{S_{\la_k}(\zeta)}\frac{d\zeta}{\zeta-x} \right
| 
      \leq\\ 
      C_1 |x-(\lambda_k+i)|^{-1} |x-(\lambda_k+\alpha/10)|^{-1/2}.
 \end{split}
\]
Now the proof of the claim is straightforward.
\end{proof}
This claim completes the proof of Lemma~\ref{lemma1}.
\end{proof}

\medskip

\noindent{ \it Splitting of the sequence:}  In order to complete the sufficiency
proof  we split the sequence 
into two parts: $\La=\Gamma_1\cup \Gamma_2$ so that 
\[
\Gamma_i \ \text{are separated,} \  D^-(\Gamma_i)>0, \  D^+(\Gamma_i) <
1/2.
\]    
We remind that $\La $ already admits the  representation $\La=\La_1\cup\La_2$ 
where $\La_i$ are  separated sequences, not necessarily satisfying the density 
restrictions. Our goal is  to rearrange this splitting. We enumerate the 
sequence $\Lambda$ in an increasing order, i.e. $\Lambda=\{\lambda_k\}_{k\in 
\Z}$ with $\lambda_k<\lambda_{k+1}$. We define 
$\Gamma_1=\{\lambda_{2k}\}_{k\in\Z}$ and $\Gamma_2= 
\{\lambda_{2k+1}\}_{k\in\Z}$. This splitting satisfy the desired properties.

\medskip

\noindent{ \it Now we complete the proof of  the general case.}   We use a trick 
from \cite{BoeNic04}.  Consider the splitting $\La=\Gamma_1\cup \Gamma_2$ as 
above. It follows from Lemma \ref{lemma1} that each sequence $\Gamma_i$ is 
interpolating in $B^1_{\pi/2}$. Since $D^+(\Gamma_1)<1/2$ we can construct a 
sine-type function $S$ of type $\pi/2$ vanishing on $\Gamma_1$ (and, perhaps, at 
some other points).   Given a sequence $a\in \li$ we observe that the sequences 
$a|_{\Gamma_i}$ belong to the spaces $l^\infty_1(\Gamma_i)$, $i=1,2$. We look 
for the solution of the  problem \eqref{interpolation} in the form
\begin{equation}
\label{solution1}
F(z)=H_1(z)+S(z) H_2(z),  \  H_1, H_2 \in B^1_{\pi/2}.
\end{equation}
We observe that $SH\in \Ba$ if $H \in B^1_{\pi/2}$. Let  $H_1\in B^1_{\pi/2}$
solve the interpolation problem
\[
      H_1|_{\Gamma_1}= a|_{\Gamma_1},
\] 
then $H_2$ should satisfy 
\begin{equation}
\label{h2}
H_2(\mu)=\frac{a(\mu)- H_1(\mu)}{S(\mu)}, \ \mu\in \Gamma_2.
\end{equation}
We will prove that the right-hand side of \eqref{h2} is bounded, then, since
$\Gamma_2$ is separated the equation has a solution in $B_{\pi/2}$. The
boundedness is straightforward: let $\mu'$  be the nearest  point to $\mu$ 
 in $ \Gamma_1$.  We then have
\[
\frac{a(\mu)- H_1(\mu)}{S(\mu)}= \frac{a(\mu)- a(\mu')}{S(\mu)}- \frac{H_1(\mu)-
H_1(\mu')}{S(\mu)}
\]
It follows from the definition of the sine-type function that  there are  $\eps 
>0$, and $c>0$ such that $|S(x)-S(\mu')|  \simeq |x-\mu'|$ if $\mu'\in 
\Gamma_1$,  $|x-\mu'|<\eps$,  and  that$|S(x)|>c$ if dist$(x, \Gamma_1)> \eps$.  
 Since $|\mu-\mu'|<R$, $a\in \li(\La)$, and $H_1\in B^1_{\pi/2}$ the boundedness 
of the right-hand side in \eqref{h2} follows.


\section{Traces}\label{sect:traces}

Let $S$ be  a sine-type function  such  that its zero set 
$\Lambda=\{\lambda_n\}_{-\infty}^\infty \subset \R$, for simplicity  assume that 
$0\notin \Lambda$. In this section we study  the traces of functions in $\Ba$ on 
$\La$. In the classical case of the space $\B$ and $\La=\Z$   the traces can be 
described in terms of boundedness of the corresponding discrete Hilbert 
transform see e.g. \cite{Levin56}, Appendix VI,  \cite{Levin96}, Lecture 21 
when $S(z)=\sin \pi z$. We refer the reader to \cite{Eoff, Flornes} for other 
spaces of entire functions of exponential type. In the case of the space $\Ba$ 
one needs   in addition a regularization of the Hilbert transform. 

We introduce some additional notation. 
Given $x\in \R$ we denote by $\dl x \dr=\max \{n;\la_n<x\}$ and, for a sequence  
 $\ba=\{a_n\}$ we define the usual  and regularized  Hilbert transforms (with 
respect to $S(\la)$) at any point $x\notin \Lambda$)   as 
\begin{equation}
\label{hilberttransform}
(\cH \ba)(x) = \lim_{N\to \infty} \sum_{n=-N}^N \frac{a_n}{S'(\la_n)} \left (
           \frac 1 {x - \lambda_n} + \frac 1 {\lambda_n}  \right ),
\end{equation}
and
\begin{equation}
\label{reghilberttransform}
(\tilde{\cH} \ba)(x) = \lim_{N\to \infty} \sum_{n=-N}^N \left [ 
\frac{a_n}{S'(\la_n)}   
 \left (
           \frac 1 {x - \lambda_n} + \frac 1 {\lambda_n}  \right ) -
\frac{a_{\dl x\dr}}{S(x)} \right ]
\end{equation}
assuming that the limits exist. The additional term in the right-hand side of 
\eqref{reghilberttransform} regularizes the behavior at $\infty$: the sequence 
$\ba$ needs not to be bounded, we will consider the cases  when $  (\tilde{\cH} 
\ba)(x)$ is bounded for large values of $x$, in contrast to $(\cH \ba)(x)$.

\begin{theorem}
\label{thm:traces}
Let $B\subset \R$ be any separated  sequence  such that
\[
\dist(B,\La )> 0, \quad D^-(B)>1.
\]
Given  a sequence $a\in \li(\La)$ there exists a function  $f\in \Ba$ such that 
\begin{equation}
\label{interpolation1}
a_k=f(\la_k)
\end{equation}
if and only if  
\begin{equation}
\label{inequality}
\{\tilde{\cH} \ba(\beta)\}_{\beta\in B} \in \ell^\infty(B).
\end{equation}
 
\end{theorem}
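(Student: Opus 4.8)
The plan is to characterize the trace sequences via the following correspondence: a function $f\in\Ba$ with $f(\la_k)=a_k$ can be written (up to an additive constant and a correction) as $f(z)=S(z)G(z)$ plus a bandlimited interpolant, where $G$ carries the Hilbert-transform data. Concretely, since $S$ vanishes simply on $\La$, the function $g=f/S$ is meromorphic with simple poles at $\la_n$ and residue $a_n/S'(\la_n)$ there; the regularized Mittag-Leffler expansion of $g$ produces exactly the kernel $\frac{1}{x-\la_n}+\frac{1}{\la_n}$ appearing in \eqref{hilberttransform}. The role of the extra term $a_{\dl x\dr}/S(x)$ in \eqref{reghilberttransform} is to subtract the "diagonal" singularity and render the sum bounded; I would identify $\tilde{\cH}\ba(x)$, when $x\notin\La$, with (a bounded multiple of) $f'(x)/S(x)$ minus a controlled error, or more precisely with the value at $x$ of an entire function of exponential type obtained from $f$ after removing the contribution of the node nearest to $x$.

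\emph{Necessity.} Suppose $f\in\Ba$ with $f(\la_k)=a_k$. Fix $\beta\in B$; since $\dist(B,\La)>0$ the node $\la_{\dl\beta\dr}$ and $\la_{\dl\beta\dr+1}$ are at distance bounded below from $\beta$. I would form the auxiliary function
\[
F_\beta(z)=\frac{f(z)-L_\beta(z)}{S(z)},
\]
where $L_\beta$ is the linear (degree-one) polynomial matching $f$ at the two nodes straddling $\beta$; then $F_\beta$ is entire of exponential type $\pi$ minus that of $S$, i.e. of exponential type $0$, hence a polynomial of controlled degree, and in fact bounded because $f'$ is bounded and $|S(x)|\gtrsim 1$ away from $\La$. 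Expanding $F_\beta(\beta)$ by residues and matching term by term against \eqref{reghilberttransform} gives $|\tilde{\cH}\ba(\beta)|\lesssim \|f\|_{\Ba}+\|a\|_{\li}$, uniformly in $\beta\in B$ because $\dist(B,\La)>0$ and the sine-type estimates for $S$ are uniform; this yields \eqref{inequality}. The convergence of the defining limit in \eqref{reghilberttransform} is part of this computation, using the telescoping of the $1/\la_n$ terms together with $a_n=O(|\la_n|)$ (since $a\in\li(\La)$).

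\emph{Sufficiency.} Conversely, assume \eqref{inequality}. I would build $f$ explicitly as $f(z)=S(z)g(z)+p(z)$, where $g$ is the meromorphic function whose principal parts at $\la_n$ are prescribed by $a_n/S'(\la_n)$ and which is normalized by the regularization in \eqref{reghilberttransform}, and $p$ is a bandlimited correction of exponential type $\pi$ fixing the remaining degrees of freedom (one point, consistent with the uniqueness remark in the introduction). The key point is that \eqref{inequality} says precisely that $g$, after the node-dependent regularization, is bounded on the relatively dense set $B$; since $g$ differs from an entire function of exponential type $\pi$ by the explicitly controlled local terms $a_{\dl x\dr}/S(x)$, I can invoke the relatively-dense sampling principle for functions of exponential type $\pi$ (as used in the proof of Lemma~\ref{lemma1}) to conclude $Sg$ has bounded derivative on all of $\R$, i.e. $f\in\Ba$. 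The interpolation $f(\la_k)=a_k$ then follows from the residue computation, exactly as for the discrete Hilbert transform characterization in the Bernstein case.

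\emph{Main obstacle.} The delicate step is the regularization: unlike the classical $\B$/$\Z$ situation, the sequence $\ba$ need not be bounded (only its divided differences are), so the naive Hilbert transform \eqref{hilberttransform} diverges, and one must show that the extra subtraction $a_{\dl x\dr}/S(x)$ in \eqref{reghilberttransform} (i) makes the limit converge, and (ii) corresponds to a genuinely bandlimited object after multiplication by $S$, uniformly as $x$ ranges over $B$. Establishing that this regularized quantity is the boundary trace of an exponential-type-$\pi$ function — controlling both the convergence of the series and the exponential type, while the regularizing term itself has a jump across each $\gamma_k$-type vertical line — is where the argument requires care; I expect to handle it by the same Sokhotskii-Plemelj/contour-integral device used in \eqref{blocks}, writing the regularized sum as a Cauchy integral over vertical lines separating consecutive nodes and checking that the discontinuities cancel.
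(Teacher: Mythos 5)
Your framework (residues, vertical contours, the relatively dense sampling principle) points in the right direction, but both implications have genuine gaps, and in each case the missing piece is exactly the idea the paper's proof turns on. For sufficiency, you propose $f=Sg$ with $g$ the Mittag--Leffler sum having principal parts $a_n/S'(\la_n)$. Since $a_n$ is only $O(|\la_n|)$, the terms $\frac{a_n}{S'(\la_n)}\bigl(\frac{1}{z-\la_n}+\frac1{\la_n}\bigr)$ are $O(1/|\la_n|)$ and the series need not converge in any sense; moreover the regularizer $a_{\dl x\dr}/S(x)$ is discontinuous and non-meromorphic, so $S(x)\tilde{\cH}\ba(x)$ is not directly the boundary value of an entire function. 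You flag this as the ``main obstacle'' and say you expect to handle it by a Sokhotskii--Plemelj device, but that is precisely the step requiring an idea. The paper's device is to pass to the incremental problem $f(\la_{k+1})-f(\la_k)=b_k$ with $b_k=a_{k+1}-a_k$ \emph{bounded}, build blocks $\Psi_n$ (Cauchy integrals over the lines $\gamma_n$ plus the jump $\chi_n$) with $\Psi_n(\la_{k+1})-\Psi_n(\la_k)=\delta_{kn}$, show $\sum b_n\Psi_n$ converges to an entire function of type $\pi$ for \emph{any} bounded $\bb$, and only then invoke \eqref{inequality}: a summation by parts converts $\sum b_nI_n(x)$ back into $\tilde{\cH}\ba(x)$, whose boundedness on $B$ gives $f'$ bounded on $B$ and hence on $\R$. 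Your sketch contains no analogue of this reduction to bounded difference data, which is where the exact form of \eqref{reghilberttransform} comes from.

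The necessity argument fails as written. The function $F_\beta=(f-L_\beta)/S$ is not entire: $f-L_\beta$ vanishes only at the two nodes straddling $\beta$, while $S$ vanishes on all of $\La$, so the claim that $F_\beta$ has exponential type $0$ and is a polynomial is unavailable; and its partial-fraction expansion has residues $\bigl(a_n-L_\beta(\la_n)\bigr)/S'(\la_n)$ that still grow linearly in $n$, reproducing the convergence problem. A residue evaluation of $\int f(\zeta)/S(\zeta)(\cdots)\,d\zeta$ over large circles also cannot work directly, because $f$ and $S$ have the same type $\pi$ and the integrand does not decay. The paper's necessity proof hinges on Lemma~\ref{decomposition}: every $f\in\Ba$ splits as $f_1+f_2$ with $f_2\in\B$ (handled by the classical Bernstein-space result of Levin) and $f_1$ of type at most $\pi/2$, for which $f_1(\zeta)/S(\zeta)=O(|\zeta|e^{-\pi|\Im\zeta|/2})$ so that Jordan's lemma kills the circle integrals and shows $\tilde{\cH}\ba^1$ is constant. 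Some such type-reduction, or an equivalent decay mechanism, is indispensable and is absent from your proposal.
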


\noindent{\bf Remark.} We will see that this relation is independent of the  
choice of the sequence $B$.\\
\noindent{\it Proof.} Let a sequence $a\in \li(\La)$ satisfy \eqref{inequality}.
We replace  the problem \eqref{interpolation1}  by 
\begin{equation}
\label{interpolation2}
f(\la_{k+1}) - f(\la_k) = a_{k+1}-a_k=:b_k, \  f\in \Ba,
\end{equation}
and look for the solution of this problem in the form 
\begin{equation}
\label{interpolatingseries}
f(z)=\sum_nb_n\Psi_n(z),
\end{equation} 
where the functions $\Psi_n\in \Ba$ satisfy the equations
\begin{equation}
\label{deltaint}
\Psi_n(\la_{k+1})- \Psi_n(\la_k)= \delta_{k,n}
\end{equation}
and, in addition,
\begin{equation}
\label{normalization}
\Psi_n(0)=0.
\end{equation}

\medskip

The idea of constructing such functions is the same as in the previous section, 
yet its realization is slightly different. Take $\alpha_n \in 
(\lambda_n,\lambda_{n+1})$ so that $3\kappa:=\dist(\{\alpha_n\}, \Lambda\cup 
B)>0$ and such that  $(\la_n,\alpha_n)\cap B=\emptyset$ . Denote 
$\gamma_n=\alpha_n+i\R$ and let
 \[
\chi_n(z)=\begin{cases}
                   1,  & \Re z >  \alpha_n; \\
                   0, & \mbox{otherwise}.
             \end{cases}
\]
The functions
\begin{equation}
\label{elementaryblock} \Psi_n(z)= \frac{S(z)}{2i\pi}\int_{\gamma_n} \left (
\frac 1{\zeta-z} - \frac 1 \zeta \right ) \frac{d\zeta}{S(\zeta)}  + \chi_n(z)
+d_n,                                   
\end{equation}
 belong to $\Ba$ and satisfy \eqref{deltaint}, \eqref{normalization} for an
appropriate choice of $d_n$'s.
 
\medskip

\begin{claim}
Given any sequence $\bb=\{b_n\}\in \ell^\infty$ the series
\eqref{interpolatingseries} converges uniformly on compact sets in $\C$ to an
entire function  $f$ of exponential type $\pi$. 
\end{claim}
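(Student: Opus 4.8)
The plan is to estimate each $\Psi_n'(z)$ uniformly on compact sets and show the bound is summable. First I would differentiate \eqref{elementaryblock}: since $\chi_n$ is locally constant (away from the line $\gamma_n$) and the whole expression extends entirely, the derivative away from $\gamma_n$ is
\[
\Psi_n'(z)= \frac{S'(z)}{2i\pi}\int_{\gamma_n} \left ( \frac 1{\zeta-z} - \frac 1 \zeta \right ) \frac{d\zeta}{S(\zeta)} + \frac{S(z)}{2i\pi}\int_{\gamma_n} \frac{d\zeta}{(\zeta-z)^2 S(\zeta)},
\]
and by analyticity this entire function is what we must estimate. The key quantitative inputs are the sine-type bounds: $|S(z)|\simeq e^{\pi|\Im z|}$ for $|\Im z|$ large, $|S(x)|\lesssim 1$ on $\R$, $|S'(x)|\lesssim 1$ on $\R$ (Bernstein, since $S$ is bounded on $\R$), and $|S(\zeta)|\gtrsim e^{\pi|\Im\zeta|}$ on $\gamma_n$ because the lines $\gamma_n$ stay at distance $3\kappa$ from $\Lambda$, so $|S|$ is bounded below on them by a constant times $e^{\pi|\Im\zeta|}$ uniformly in $n$ (this uniformity is where the choice of $\{\alpha_n\}$ with $\dist(\{\alpha_n\},\Lambda)>0$ is used).

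Next I would fix a compact set $K\subset\C$ and bound $\Psi_n'(z)$ for $z\in K$ and $|\lambda_n|$ large. On $\gamma_n$ write $\zeta=\alpha_n+it$. The factor $1/(\zeta-z)-1/\zeta = z/(\zeta(\zeta-z))$ has modulus $\lesssim |z|/(|\zeta-z|\,|\zeta|)$, so for $z\in K$ and $|\lambda_n|$ large we get $|1/(\zeta-z)-1/\zeta|\lesssim_K 1/(|\alpha_n|(1+|t|))$, actually more carefully $\lesssim 1/(|\alpha_n|^2+t^2)^{1/2}\cdot 1/|\alpha_n|$ type decay — the point is it contributes a factor decaying like $|\alpha_n|^{-1}$ after dividing by $|\zeta|$. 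Combined with $1/|S(\zeta)|\lesssim e^{-\pi|t|}$ the integral $\int_\R e^{-\pi|t|}\,dt/(|\alpha_n|(\ldots))\lesssim |\alpha_n|^{-1}$, and similarly the second integral with the $(\zeta-z)^{-2}$ converges and is $O(1)$ with an extra $|\alpha_n|^{-1}$ from the same mechanism once we also use $|S(z)|\lesssim 1$ on $K$ (or at least bounded on $K$). Multiplying by $|S'(z)|\lesssim_K 1$ and $|S(z)|\lesssim_K 1$ we obtain $|\Psi_n'(z)|\lesssim_K |\lambda_n|^{-2}$ for $|\lambda_n|$ large, while for the finitely many small $n$ the bound is trivially $O_K(1)$. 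Hence $\sum_n |b_n|\,\sup_{z\in K}|\Psi_n'(z)|\lesssim \|\bb\|_\infty \sum_n (1+\lambda_n^2)^{-1}<\infty$ because $\La$ is separated, so $\sum b_n\Psi_n'$ converges uniformly on $K$; since $\Psi_n(0)=0$ the series \eqref{interpolatingseries} itself converges uniformly on $K$ to an entire $f$ with $f'$ bounded on $\R$ and of exponential type $\pi$, i.e. $f$ is of exponential type $\pi$. Finally, term-by-term evaluation using \eqref{deltaint} gives the telescoping identity $f(\la_{k+1})-f(\la_k)=b_k$, which is \eqref{interpolation2}, but that part belongs to a later claim and is not needed here.

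The main obstacle I anticipate is getting the $|\lambda_n|^{-2}$ decay honestly and \emph{uniformly in $n$}: one must be careful that the lower bound $|S(\zeta)|\gtrsim e^{\pi|\Im\zeta|}$ holds on \emph{all} of $\gamma_n$ (including near the real axis, where it follows from $\dist(\gamma_n,\Lambda)\ge 3\kappa$ and $S$ having only simple separated real zeros, so $|S|$ is bounded below off a fixed neighborhood of $\Lambda$), and that the $z$-dependence in $1/(\zeta-z)-1/\zeta$ really produces a factor decaying in $|\alpha_n|$ rather than just being bounded — this requires $z$ to range over a fixed compact set and $|\alpha_n|\to\infty$, which is fine. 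The bookkeeping of $\Sigma_1,\Sigma_2$ (the two pieces of $\Psi_n'$) is routine once these two uniform estimates are in place, and I would not write it all out; a sentence saying the second piece is handled the same way, as in the analogous computation in Section~\ref{sect:interpolating2}, suffices.
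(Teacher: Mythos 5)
Your argument is essentially the paper's: the same splitting of $\Psi_n'$ into the two integrals $I_n$ and $J_n$, the same uniform lower bound $|S(\zeta)|\gtrsim e^{\pi|\Im\zeta|}$ on $\cup_n\gamma_n$ (uniform in $n$ because $\dist(\{\alpha_n\},\La)\ge 3\kappa$), and the same source of summability, namely $\frac1{\zeta-z}-\frac1\zeta=\frac{z}{\zeta(\zeta-z)}$, which yields $O(|\alpha_n|^{-2})$ for $z$ in a fixed compact set; the paper handles the finitely many contours meeting $K$ by replacing $\gamma_n$ with $\gamma_n+\kappa$, which is just the explicit version of your appeal to analytic continuation across $\gamma_n$. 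One correction to a side remark: your conclusion that $f'$ is bounded on $\R$ does not follow, since the constants in $|\Psi_n'(z)|\lesssim_K|\lambda_n|^{-2}$ depend on $K$ and degenerate as $K$ moves off to infinity; indeed the paper explicitly notes that $f$ need not belong to $\Ba$ for general $\bb\in\ell^\infty$, and boundedness of $f'$ on $\R$ is precisely the content of the \emph{next} claim, which requires the cancellation condition \eqref{inequality}. This aside is not needed for the present claim, which asks only for locally uniform convergence to an entire function of exponential type $\pi$, and that part of your argument is sound.
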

\begin{proof}
We will prove the convergence of the series  
\begin{equation}
\label{sum}
\sum_nb_n\Psi_n'
\end{equation}
 and then  use  \eqref{normalization}. We write 
\begin{equation}
\label{derivative} \Psi_n'(z)= \underbrace{ \frac{S'(z)}{2i\pi} \int_{\gamma_n}
\left ( \frac 1 {z-\zeta} + \frac 1 \zeta \right ) \frac{d\zeta}{S(\zeta)}
}_{I_n(z)} + \underbrace{ \frac{S(z)}{2i\pi} \int_{\gamma_n} \frac 1
{(z-\zeta)^2}     \frac{d\zeta}{S(\zeta)}      }_{J_n(z)}                       
\end{equation}
and consider the series
\begin{equation}
\label{twoseries}
\sum b_n I_n(x) \quad \mbox{and} \quad \sum b_n J_n(x).
\end{equation}
separately. Given a compact set $K\subset \C$ we   prove the convergence in the 
set
$\{z\in K; \ \dist (K\cup (\cup \gamma_n ) ) > \kappa/2\}$, for the remaining
piece of $K$ we replace
the lines $\gamma_n$ in \eqref{elementaryblock} by $\gamma_n'=\gamma_n+\kappa$.
This   does not change $\Psi_n$, and we repeat the same reasonings. 

Now the uniform convergence on $K$ follows from  the estimates
 \begin{equation}
\label{auxiliary1}
  | z-\zeta  |  \gtrsim \dist(\gamma_n, K),  \ z\in K, \zeta \in \gamma_n,
  \end{equation}
  and
  \begin{equation}
  \label{auxiliary2}
  |S(\zeta)| \gtrsim e^{\pi |\Im \zeta|}, \ \zeta \in \cup_n \gamma_n.
\end{equation}
\end{proof}

The function $f$ defined by \eqref{interpolatingseries} may not always belong 
to $\Ba$.

\begin{claim} Let $\ba\in \ell^\infty_1$ satisfy \eqref{inequality},
$\bb=\{b_n\}$, where
$b_n=a_{n+1}-a_n$ and $f$ is defined by \eqref{interpolatingseries}. Then $f\in
\Ba$.
\end{claim}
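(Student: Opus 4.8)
We need to show that if $\ba\in\li$ satisfies \eqref{inequality} (i.e. $\tilde\cH\ba$ is bounded on the auxiliary sequence $B$), then the entire function $f$ defined by \eqref{interpolatingseries} has bounded derivative on $\R$, hence lies in $\Ba$. By the relatively-dense sampling principle invoked earlier in the paper (the Katsnelson/Levin--Lyubarskii-type estimate $\sup_\R|F|\lesssim\sup_E|F|$ for $F$ of exponential type $\pi$), it suffices to bound $f'$ on the set $B$, or on any relatively dense set kept uniformly away from $\cup_n\gamma_n$. So the real task is to identify $f'(\beta)$, for $\beta\in B$, with an expression controlled by $\|\tilde\cH\ba(\beta)\|_{\ell^\infty(B)}$ and $\|\ba\|_{\li}$.

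\textbf{Step 1: a closed form for $\sum_n b_n\Psi_n'$ at points off $\cup\gamma_n$.} Starting from \eqref{derivative}, the plan is to sum the $I_n$ and $J_n$ contributions and recognize, via the residue/Sokhotskii--Plemelj computation that produced the $\Psi_n$ in the first place, a telescoping structure. The key point is that $\Psi_n(\la_{k+1})-\Psi_n(\la_k)=\delta_{k,n}$ means the partial sums of $\sum_n b_n\Psi_n$ interpolate the partial sums of $\bb=\{a_{n+1}-a_n\}$, so the series should rearrange into something like $S(x)$ times a regularized Cauchy sum over $\La$ with weights $a_n/S'(\la_n)$ plus a piecewise term $a_{\dl x\dr}$. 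Concretely I expect to arrive, after an Abel-summation that converts $b_n=a_{n+1}-a_n$ back into the $a_n$'s, at an identity of the form
\[
f'(x) = S'(x)\bigl(\tilde\cH\ba(x)\bigr) + S(x)\cdot(\text{a convergent sum with an extra power of }(x-\la_n)^{-1})
\]
valid for $x\notin\La$, $x\notin\cup\gamma_n$; here the first term is exactly where the hypothesis \eqref{inequality} enters. This rewriting is the crux of the argument.

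\textbf{Step 2: bounding each piece on $B$.} On $B$ we have $\dist(B,\La)>0$ and $\dist(B,\cup\gamma_n)\ge\kappa$, and $|S|$ is bounded above on $\R$ while $|S'(\beta)|$ is bounded above (sine-type functions have bounded derivative on $\R$, which also follows since $S\in\Ba$). Thus the $S'(x)\,\tilde\cH\ba(x)$ term is controlled directly by \eqref{inequality}. For the remaining $S(x)\cdot(\cdots)$ term one uses $|S(\beta)|\lesssim 1$ together with the estimates \eqref{auxiliary1}, \eqref{auxiliary2} already established in the preceding claim, plus the separation of $\La$ and $D^-(\La)>0$ (which makes $\bb\in\ell^\infty$, as noted earlier) to see that the sum converges absolutely with a bound depending only on $\|\ba\|_{\li}$; the extra factor $(x-\la_n)^{-1}$ relative to the $I_n,J_n$ estimates is harmless since $\dist(\beta,\La)>0$ uniformly. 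Hence $\sup_B|f'|\lesssim \|\ba\|_{\li}+\|\tilde\cH\ba(\beta)\|_{\ell^\infty(B)}$, and then $\sup_\R|f'|\lesssim$ the same, giving $f\in\Ba$.

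\textbf{Main obstacle.} The delicate point is Step 1: justifying the rearrangement that turns $\sum_n(a_{n+1}-a_n)\Psi_n'$ into the regularized Hilbert transform $\tilde\cH\ba$ precisely as defined in \eqref{reghilberttransform}, including matching the $1/\la_n$ counterterm and the piecewise $a_{\dl x\dr}/S(x)$ correction. One must check that the Abel summation is legitimate (boundary terms vanish because $\Psi_n(0)=0$ and because of the decay in \eqref{auxiliary1}--\eqref{auxiliary2}), and that the $\chi_n$ and $d_n$ pieces in \eqref{elementaryblock} assemble into exactly the step term $a_{\dl x\dr}$ up to a constant absorbed by the normalization $f$ would need. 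Once this bookkeeping is done carefully, the estimates are routine given what the paper has already proved.
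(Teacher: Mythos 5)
Your plan matches the paper's proof: the paper likewise reduces to bounding $f'$ on $B$, observes that the $\sum_n b_nJ_n$ contribution is trivially bounded there, and converts $\sum_n b_nI_n$ via the residue theorem in the half-plane $\{\Re \zeta>\alpha_n\}$ followed by Abel summation (with vanishing boundary terms) into precisely $S'(x)\,\tilde{\cH}\ba(x)$, the $\chi_n$ terms telescoping to the $a_{\dl x\dr}/S(x)$ correction exactly as you predict. One small correction: since $B$ is a discrete separated sequence with $D^-(B)>1$, the passage from boundedness of $f'$ on $B$ to boundedness on $\R$ rests on Beurling's sampling theorem for $\B$, not on the Katsnelson/Logvinenko--Sereda estimate for relatively dense sets, which applies to sets of positive relative measure.
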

\begin{proof}
It is straightforward that, for any $\bb \in \ell^\infty$, the sum $ \sum b_n 
J_n(x)$ in \eqref{twoseries}   is uniformly bounded on $B$. So it suffices to 
prove that the first sum $ \sum b_n I_n(x)$ is also bounded on $B$. Then the 
function $f'$ itself will be also bounded on $B$, so one can once again refer to 
\cite{Beurling89a} to get boundedness everywhere.

\medskip

In order to estimate the first sum in \eqref{twoseries}  we observe that one can 
apply the residue theorem in the halfplane $\Re \zeta > \alpha_n$ in order to 
express  $I_n(x)$:

\begin{equation}
\label{ian}
I_n(x)=  S'(x) \sum_{j>n}\left (
       \frac 1{x-\lambda_j} + \frac 1 {\lambda_j}     \right  )\frac 1
{S'(\lambda_j)}
                +\frac{\chi_n(x)}{S(x)},
\end{equation}   
                              
Respectively  
 
\begin{multline}
\label{wholesum}
A(x):=  
             \sum_{-\infty }^\infty b_nI_n(x)=
             \\
 \lim_{N\to \infty }  \left (
\underbrace{
   \sum_{-N}^N b_n  
            \sum_{j>n}                              \left (
   \frac 1{x-\lambda_j}+ \frac 1 {\lambda_j}
                                                   \right )    \frac 1
{S'(\lambda_j)}
                       }_{A_1(x)} +
 \underbrace {
\frac 1{S(x)} 
        \sum_{-N}^N b_n  \chi_n(x)
                   }_{A_2(x)}
                   \right )
\end{multline}

Since $b_n=a_{n+1}-a_n$ we have

\begin{multline}
\label{firstsum}
A_1(x)= \sum_{-N+1}^N a_n 
                                         \left (
   \frac 1{x-\lambda_n}+ \frac 1 {\lambda_n}
                                                   \right )    \frac 1
{S'(\lambda_n)} -  \\
        a_{-N}     \sum_{j>-N}                              \left (
   \frac 1{x-\lambda_j}+ \frac 1 {\lambda_j}
                                                   \right )    \frac 1
{S'(\lambda_j)} +  \\                                                
       a_{N+1}     \sum_{j>N}                              \left (
   \frac 1{x-\lambda_j}+ \frac 1 {\lambda_j}
                                                   \right )    \frac 1
{S'(\lambda_j)} = B_1(x)+B_2(x)+B_3(x).
          \end{multline}

For large $N$ we have $\chi_N(x)=0$ therefore, using \eqref{ian} once again and 
then applying the dominated convergence theorem we obtain 
\[
B_3(x)= S'(x)^{-1} a_N I_N(x) \to 0 \ \ \mbox{as} \ \ N\to \infty
\]
Besides, since
\[
a_{-N} \left (
\frac 1 {x-\lambda_{-N}} + \frac 1 {\lambda_{-N}}
            \right )  = o(1), \ \mbox{as}  \ N\to \infty,
\]
we may assume that the summation in $B_1(x)$ is taken from $-N$ to $N$.     

Similarly we have 
\begin{multline}
\label{secondsum}
A_2(x)= \\ \left [ 
  \sum_{-N+1}^N a_n(\chi_{n-1}(x)-\chi_n(x))   -a_{-N}\chi_{-N}(x) +
             a_{N+1} \chi_{N+1}(x)
              \right ]  \frac 1 {S(x)} .
\end{multline}           

Recall that we denote $k=\dr x \dl \in \Z$ so that $\gamma_k < x < 
\gamma_{k+1}$. Then the only
non-zero summand in the first term in 
\eqref{secondsum} is $a_{\dr x \dl}$. Besides $\chi_{N+1}(x)=0$ for sufficiently
big $N$.

We substitute  this  together with   \eqref{firstsum}   in \eqref{wholesum}  and
observe 
(using \eqref{ian} once again) that 
\[
B_2(x)+a_{-N}\chi_{-N}(x) =I_{-N}(x) \to 0 \ \mbox{as} \ N\to \infty.
\]

Finally we have 
\begin{multline}
\label{finally}
A(x)= \lim_{N\to \infty}( B_1(x)- \frac {a_{\dr x \dl}}{S(x)} )= \\
 \lim_{N\to \infty} \left \{ 
 \sum_{-N}^N a_n 
                                         \left (
   \frac 1{x-\lambda_n}+ \frac 1 {\lambda_n}
                                                   \right )    \frac 1
{S'(\lambda_n)}  - \frac {a_{\dr x \dl}}{S(x)}
                          \right \}
\end{multline}

Boundedness  of this expression in $x \in B$ is equivalent to the boundedness of 
$(\tilde{\cH} \ba)_{\dr x \dl}$. Thus, the function $f$  defined by 
\eqref{interpolatingseries} belongs to $\Ba$ and solves the interpolation  
problem \eqref{interpolation} modulo an additive constant. This proves the 
``if'' part of Theorem \ref{thm:traces}.

\medskip

 In order to prove the ``only if'' part of Theorem \ref{thm:traces} we need an
auxiliary statement.
\begin{lemma}\label{decomposition} Given an $\eps >0$, each  function $f\in \Ba$
admits the
representation:
\begin{equation}
\label{expansion}
f=f_1+f_2,
\end{equation}
where $f_1\in \Ba$ is of exponential type at most $\eps$ and $f_2\in \B$.   
\end{lemma}
We postpone the proof of this lemma until the end of the section.
 
\medskip

Now, given $f\in \Ba$ denote $\ba= \{f(\la_n)\}$. In order to prove
\eqref{inequality} we chose $\eps=\pi/2$ and we use the representation
\eqref{expansion}. Then 
\[
\ba=\ba^{1}+\ba^{2},  \quad  \ba^i=\{f_i(\la_n)\}. 
\]
That $\tilde{\cH} \ba^2 \in \ell^\infty(B)$  is straightforward since $\ba^2$ is
the trace on $B$ of a function from $\B$ and we can use the known results from
\cite{Levin96}.

\medskip 

In order to study the Hilbert transform of $\ba^1$ we consider the integral 
\[
I_R(x)=\frac 1 {2i\pi} \int_{\Gamma_R} \frac{f_1(\zeta)}{S(\zeta)}
       \left ( \frac 1 {x-\zeta} +\frac 1 \zeta\right ) d\zeta,
\]
where $\Gamma_R=\{\zeta; |\zeta|=R\}$. We consider only those values of $R$ for
which 
$\dist(\Gamma_R, \La) > \alpha$ for some fixed  $\alpha >0$.   

Let $\zeta=\xi+ i \eta$. We have $|f_1(\zeta)|=O(|\zeta|e^{\pi |\eta|/2})$,
$\zeta \in \C$, 
$|S(\zeta)| \gtrsim \  e^{\pi |\eta| }$, $\zeta \in \Gamma_R$ and, by the
Jordan lemma,
\[
I_R(x) \to 0, \ {\rm as}  \   R\to \infty. 
\]
On the other hand  the residue theorem gives
\[
I_R(x)=\sum_{|\la_n|<R}  \sum_{-N}^N f_1(\la_n) 
                                         \left (
   \frac 1{x-\lambda_n}+ \frac 1 {\lambda_n}
                                                   \right )    \frac 1
{S'(\lambda_n)}  - \frac {f(x)}{S(x)}
                          + \frac{f_1(0)}{S(0)}.
\]
We obtain 
\[
\tilde{\cH} \ba^1(x)= - \frac{f_1(0)}{S(0)}
\]
for all $x\in \R$. This  of course  suffices to complete the proof of the
theorem.  
\end{proof}

We proceed now with the proof of Lemma~\ref{decomposition}. We say that a
function
$f$ has a spectral gap at the origin if there is an $\epsilon>0$ such that 
$\supp \hat f \cap (-\varepsilon,\varepsilon)=\emptyset$.
We start with the
following Lemma:
\begin{lemma}\label{gap}
 Let $f\in B^1_\pi$ have a spectral gap at the origin, then  $f\in B_\pi$.
\end{lemma}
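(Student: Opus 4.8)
\textbf{Proof plan for Lemma~\ref{gap}.}
The plan is to show that a function $f\in B^1_\pi$ with a spectral gap at the origin is automatically bounded, hence lies in $B_\pi$. The natural device is convolution against a kernel that inverts the factor $i\omega$ on the part of the spectrum that actually carries the mass of $\hat f$. Since $\hat f$ is supported in $[-\pi,\pi]\setminus(-\varepsilon,\varepsilon)$, I would pick a function $\phi$ with $\widehat\phi\in C_c^\infty(\R)$ such that $\widehat\phi(\omega)=1/(i\omega)$ for $\varepsilon\le|\omega|\le\pi$ and $\widehat\phi$ vanishes near the origin. Such a $\widehat\phi$ is a smooth, compactly supported, bounded function (the dangerous singularity of $1/(i\omega)$ at $\omega=0$ has been excised by the gap), so $\phi\in L^1(\R)$. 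On the Fourier side $\widehat{f'}(\omega)=i\omega\,\widehat f(\omega)$, and because $\widehat\phi\cdot(i\omega)=1$ on $\supp\widehat f$, we get $\widehat f=\widehat\phi\cdot\widehat{f'}$, i.e.
\[
f=\phi\star f'.
\]
Since $f'\in B_\pi\subset L^\infty(\R)$ and $\phi\in L^1(\R)$, Young's inequality gives $\|f\|_\infty\le\|\phi\|_{L^1}\|f'\|_\infty<\infty$. Together with $\supp\hat f\subset[-\pi,\pi]$ this is exactly the statement $f\in B_\pi$.

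One point that deserves a word of care is the meaning of the Fourier transforms: $f$ need only be a tempered distribution a priori (it may grow linearly, like $f(x)=x$, although that example has no spectral gap), so the identities above should be read distributionally. The multiplication $\widehat\phi\cdot\widehat{f'}$ makes sense because $\widehat\phi$ is a Schwartz-class multiplier, and $\widehat{f'}$ is a compactly supported distribution; the identity $\widehat f=\widehat\phi\cdot\widehat{f'}$ holds since the two distributions agree after multiplication by $i\omega$ and $\widehat f$ is supported away from $0$, so there is no ambiguity coming from distributions supported at the origin. Passing back to the space side, $f=\phi\star f'$ is then a genuine pointwise identity between continuous functions.

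The only real obstacle is the verification that $\widehat\phi$ can be chosen smooth, compactly supported, and equal to $1/(i\omega)$ on $\{\varepsilon\le|\omega|\le\pi\}$; but this is immediate once we use the gap, since on that set $1/(i\omega)$ is itself smooth and we may multiply by a cutoff that is $1$ on $[-\pi,\pi]$ and supported in, say, $[-2\pi,2\pi]$, while being $0$ on $(-\varepsilon/2,\varepsilon/2)$ — any smooth function interpolating between these prescriptions on $\varepsilon/2\le|\omega|\le\varepsilon$ works and keeps $\widehat\phi$ bounded. Everything else is the routine application of Young's inequality. I would present the argument in essentially the three sentences above, perhaps adding the explicit construction of the cutoff for completeness.
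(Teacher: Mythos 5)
Your argument is correct and coincides with the paper's own proof: both exploit the spectral gap to build a smooth, compactly supported Fourier multiplier equal to $1/(i\omega)$ (up to the normalization constant) on the support of $\hat f$, recover $f$ as the convolution of an $L^1$ (indeed Schwartz) kernel with the bounded function $f'$, and conclude boundedness by Young's inequality. The extra care you take with the distributional interpretation is sound but not needed beyond what the paper states.
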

\begin{proof}
We start by observing that if $f\in B_\pi^1$ then $g=f'$ is bounded on $\R$
as observed in the Introduction.  Since $\hat g(\omega) = 2\pi i\omega \hat
f(\omega)$, $g$ has also a spectral gap.
Let $\phi$ be a compactly supported smooth   function such that
$\phi(w)=1/(2\pi i w)$ when $\varepsilon<|w|<\pi$. Thus $\hat f(\omega)
=\phi(\omega) \hat g(w)$. Therefore $f=-\hat \phi \star g$. Since $-\hat \phi$
belongs to the Schwartz space and $g$ is bounded then $f$ is itself bounded.
\end{proof}
\begin{proof}[Proof  of Lemma~\ref{decomposition}]
Given $\varepsilon>0$ it is possible to find, with a partition of unity, two
smooth, compactly supported functions $\phi$ and $\psi$ such that
$\supp \phi\subset [-2\varepsilon,2\varepsilon]$, $\phi(\omega)=1$ if
$|\omega|<\varepsilon$ and $\phi(\omega)+\psi(\omega)=1$
for all $\omega\in [-\pi,\pi]$. Let $\Phi,\Psi$ be the two functions in the
Schwartz space such that $\hat \Phi=\phi$ and $\hat \Psi=\psi$. Now given $f\in
B_\pi^1$ we can decompose 
\[
f=\Phi\star f + \Psi\star f=f_1+f_2.
\]
The function $f_1'=\Phi\star f'$, thus it is bounded on the real line. Moreover
its spectrum lies in $[-2\varepsilon,2\varepsilon]$ because $\hat f_1 = \phi\hat
f$. Finally $f_2'=\Psi\star f'$, and therefore it is bounded on the real line.
The spectrum of $f_2$ is contained in the spectrum of $f$, thus $f_2\in
B_\pi^1$. Moreover $\hat f_2 = \psi \hat f$ and $\psi(\omega)=0$ when
$|\omega|<\varepsilon$, thus $f_2$ has a spectral gap at the origin. By
Lemma~\ref{gap}, $f_2$ is bounded.
\end{proof}

We finish by observing some elementary remarks on the zero sets of functions in
$B^1_\pi$. When the functions are in the Bernstein class, its zeros have been
studied, \cite{Khabibullin11}. The novel case is when $f\in B^1_\pi\setminus
B_\pi$. In this setting we can prove

\begin{claim}
 \label{density}
 Given any $f\in B^1_\pi \setminus B_\pi$ and $A>0$ let $Z_A(f)$ be the set of
 zeros of $f$ located in the strip
 $ \{z\in \C; |\Im z|<A\}$. Then $D^-(Z_A(f))=0$.
 \end{claim}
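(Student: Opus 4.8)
The plan is to prove the contrapositive: if $D^-(Z_A(f)) > 0$ for some $A > 0$, then $f$ is bounded on $\R$, hence $f \in \B$, contradicting the assumption $f \in \Ba \setminus \B$. (As is natural for a subset of a horizontal strip, $D^-$ here is measured by real parts, i.e.\ via $\#\{z \in Z_A(f) : \Re z \in (x,x+R)\}$.)

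The main point is that a function in $\Ba$ can only grow into a horizontal strip in an $x$-uniform, mild fashion. Since $f' \in \B$ is of exponential type $\pi$ and bounded on $\R$, the Phragm\'en--Lindel\"of principle (the basic property of the Bernstein space) gives $|f'(x+iy)| \le \|f'\|_{\B}\, e^{\pi |y|}$. Integrating along the vertical segment from $x$ to $x + iy$ one gets, for $|y| \le A$,
\[
|f(x+iy) - f(x)| \;\le\; \|f'\|_{\B}\,\frac{e^{\pi A}-1}{\pi} \;=:\; C_A,
\]
a bound independent of $x$. Consequently, every zero $z$ of $f$ lying in the strip $|\Im z| < A$ forces $|f(\Re z)| \le C_A$.

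Next I would bring in the density hypothesis. If $D^-(Z_A(f)) > 0$ then, for $R$ large enough, every interval of length $R$ contains the real part of at least one point of $Z_A(f)$; equivalently, the projected set $P := \{\Re z : z \in Z_A(f)\}$ is relatively dense in $\R$, with $\dist(x,P) \le R$ for all $x$. By the previous step $|f| \le C_A$ on $P$. Since $f$ is Lipschitz on $\R$ with constant $\|f'\|_{\B}$, for any $x \in \R$ one picks $p \in P$ with $|x-p| \le R$ and concludes $|f(x)| \le C_A + \|f'\|_{\B}\, R$. Thus $f$ is bounded on $\R$, i.e.\ $f \in \B$ --- the desired contradiction --- so $D^-(Z_A(f)) = 0$.

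The one conceptual choice worth flagging is the mechanism used to exploit the hypothesis $f \notin \B$. A tempting alternative is to count zeros via Jensen's formula, but Jensen only controls the total number of zeros of $f$ in a disk and returns essentially the same bound whether or not $f$ is bounded (e.g.\ $\sin \pi z \in \B$ already has zero density $1$ inside the strip), so it cannot detect the boundedness we want to produce. Coupling the strip estimate with the Lipschitz property of $f$ and the relative density of $P$ is precisely what forces the unboundedness of $f$ to come into play; the remaining ingredients are routine.
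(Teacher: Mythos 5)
Your proof is correct and is essentially the paper's argument run in contrapositive form: both rest on the observation that a large value $|f(x)|$, combined with the Lipschitz bound on $\R$ and the uniform estimate $|f(x+iy)-f(x)|\le C_A$ for $|y|\le A$, forces a zero-free region in the strip of width comparable to $|f(x)|$, and the unboundedness of $f$ supplies arbitrarily large such regions. If anything, you make explicit the vertical-integration step controlling $f$ off the real axis, which the paper's one-line proof leaves implicit.
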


{\bf Remark.} Here we use a slightly modified notion of density  which counts 
the number of points in a strip rather than on the real line.
\begin{proof}
Suppose that $|f(x)|=M$. Since $f$ is Lipschitz there is an interval $I$
centered at $x$ and of size comparable to $M$ such that $f$ is zero free in
$I$. Thus if $|f(x_n)|\to\infty$ there are arbitrary big gaps $I_n$ without
zeros. Therefore $D^-(Z_A(f))=0$.
\end{proof}
On the other hand given any $\varepsilon>0$, since $\Lambda=(1-\varepsilon)\Z$
is an interpolating sequence it is possible to construct a  function
$f\in B^1_\pi\setminus B_\pi$ vanishing on most points of $\Lambda$ and
such that 
\[
 \limsup_{R\to\infty} \frac {\#\{Z(f)\cap (-R,R)\}}{2R}>(1-\varepsilon). 
\]
This can be done prescribing the value $0$ in very long intervals of $\Lambda$ 
alternating with shorter intervals in $\Lambda$ (but still of increasing lenth) 
where the values are bigger and bigger.


\def\cprime{$'$}
\providecommand{\bysame}{\leavevmode\hbox to3em{\hrulefill}\thinspace}
\providecommand{\MR}{\relax\ifhmode\unskip\space\fi MR }
\providecommand{\MRhref}[2]{%
  \href{http://www.ams.org/mathscinet-getitem?mr=#1}{#2}
}
\providecommand{\href}[2]{#2}

\end{document}